\pgfplotsset{compat=1.18}
\tikzset{myspherestyle/.style={ball color=blue!10!white, opacity=0.5}}
\def\r{1.8cm}
\newcommand\irregularcircle[2]{
  \pgfextra {\pgfmathsetmacro\len{(#1)+rand*(#2)}}
  +(0:\len pt)
  \foreach \a in {40,80,...,320}{
    \pgfextra {\pgfmathsetmacro\len{(#1)+rand*(#2)}}
    -- +(\a:\len pt)
  } -- cycle
}
\newtheorem{theorem}{Theorem}
\newtheorem{proposition}[theorem]{Proposition}%
\newtheorem{corollary}[theorem]{Corollary}%
\newtheorem{lemma}[theorem]{Lemma}%
\newtheorem{remark}{Remark}%
\numberwithin{equation}{section}
\newcommand*\mathhat{\widehat}
\let\oldexp\exp
\newcommand*{\Exp}[1]{\oldexp#1}
\renewcommand{\exp}[1]{\mathrm{e}^{#1}}
\renewcommand{\innerproduct}[2]{\left<{#1},{#2}\right>}
\newcommand*{\reals}{\mathbb{R}}
\newcommand*\from{\colon}
\newcommand*{\transpose}[1]{{#1}^{\!\mathsf{T}}}
\newcommand*{\inverse}[1]{{#1}^{-1}}
\newcommand*{\new}[1]{{#1}}
\begin{document}

\linespread{1.0}
\title[Approximating maps into manifolds]{%
	Approximating maps into manifolds with lower curvature bounds%
}
\author*[1]{\fnm{Simon} \sur{Jacobsson}\orcidlink{0000-0002-1181-972X}}
\email{simon.jacobsson@kuleuven.be}
\author[1]{\fnm{Raf} \sur{Vandebril}\orcidlink{0000-0003-2119-8696}}
\author[2]{\fnm{Joeri} \sur{van der Veken}\orcidlink{0000-0003-0521-625X}}
\author[1,3]{\fnm{Nick} \sur{Vannieuwenhoven}\orcidlink{0000-0001-5692-4163}}
\affil[1]{\orgdiv{Department of Computer Science}, \orgname{KU Leuven}, \orgaddress{\street{Celestijnenlaan 200A - box 2402}, \city{Leuven}, \postcode{3000}, \country{Belgium}}}
\affil[2]{\orgdiv{Department of Mathematics}, \orgname{KU Leuven}, \orgaddress{\street{Celestijnenlaan 200A - box 2400}, \city{Leuven}, \postcode{3000}, \country{Belgium}}}
\affil[3]{\orgdiv{KU Leuven Institute for AI}, \city{Leuven}, \postcode{3000}, \country{Belgium}}

\abstract{%
	Many interesting functions arising in applications map into Riemannian manifolds.
	We present an algorithm, using the manifold exponential and logarithm, for approximating such functions.
	Our approach extends approximation techniques for functions into linear spaces in such a way that we can upper bound the forward error in terms of a lower bound on the manifold's sectional curvature.
	Furthermore, when the sectional curvature is nonnegative, such as for compact Lie groups, the error is guaranteed to not be worse than in the linear case.

	We implement the algorithm in a Julia package \texttt{ManiFactor.jl} and apply it to two example problems.
}

\keywords{%
	Riemannian manifold,
	manifold exponential,
	manifold logarithm,
	function approximation,
	constructive approximation.
}

\maketitle


\pacs[MSC Classification]{15A69, 53Z99, 65D15}

\section{Introduction}%
\label{sec:Introduction}

Approximation theory is concerned with approximating functions with simpler functions.
Classic approximation theory studies schemes for approximating real-valued functions.
For example, univariate schemes like interpolation or regression~\cite{Christensen04,Trefethen13}, but also multivariate schemes like hyperbolic crosses, sparse grids, greedy approximation, and moving least squares~\cite{Temlyakov18,Wendland04}.
More recently, multivariate function approximation methods based on tensor decompositions have been extensively investigated~\cite{BM2002,BM2005,BEM2016,HT2017,HN2018,GKM2019,Dolgov2021,Strossner2022,Strossner2023}.
All of these schemes are naturally extended to approximate vector-valued functions by applying them component-wise.

Many interesting functions arising in applications have some further structure.
However, naively applying aforementioned classic approximation schemes fails to preserve such structures.
To remedy this, we introduce a scheme to approximate maps whose values are points on a \emph{Riemannian manifold}.
Our scheme works by pulling back the approximation problem to \new{a tangent space}.
In this way, the problem is reduced again to the classic function approximation problem between vector spaces.
Consequently, our scheme can leverage all of the previously mentioned classic techniques.

\new{%
Some examples from applications of structured data that can be described as points on a Riemannian manifold are 
orthogonal matrices from motion tracking, matrix decompositions, differential equations, statistics, and optimization~\cite{Zimmermann20,Zimmermann22,Chakraborty19,Absil08,Benner15};
symmetric positive definite matrices and tensors from diffusion tensor imaging, materials modeling, computer vision, and statistics~\cite{Cheng16,Thanwerdas21,Thanwerdas23,Pennec20a,Moakher06,Pennec20b};
fixed-rank matrices and tensors from compression, tensor completion, and dynamic low-rank approximation~\cite{Oseledets2010,Oseledets2011,Swijsen2021,Vandereycken12,Koch07};
linear subspaces from Krylov subspace methods and subspace tracking~\cite{Soodhalter20,Zimmermann22b,Benner15,Yang95};
and sequences of nested linear subspaces from principal component analysis~\cite{Manikovich24,Buet23}.
}%
For instance, the present work was conceived to address difficulties encountered in the context of reduced order models of a microstructure materials engineering problem in~\cite{DVLM2022}.

\subsection{Contribution}%
\label{sub:Contribution}

We propose to approximate a map $f \from R \to M$, where $R$ is a \new{set} and $M$ is a Riemannian manifold, by a natural, generally applicable three-step template:
\begin{enumerate}
	\item Choose a point $p \in M$ from which to linearize the approximation problem.%
	\label{item:step1}
	\item Pull $f$ back to the tangent space $T_p M$ using a \emph{normal coordinate chart}~\cite{Lee18}, and approximate the resulting map $g := {\log_p} \circ f \from R \to T_p M$ by $\mathhat{g} \from R \to T_p M$ using an arbitrary approximation scheme for maps into vector spaces.
	\label{item:step2}
	\item Push $\mathhat{g}$ forward with $\operatorname{exp}_p$ to yield $\mathhat{f} := {\operatorname{exp}_p} \circ \mathhat{g} \from R \to M$.%
	\label{item:step3}
\end{enumerate}
While the proposed template is \new{a classic approach}, the main challenge is establishing a bound on the approximation error, as any discrepancy between $g$ and $\mathhat{g}$ will be propagated through the exponential map.
This requires understanding how the endpoint of a geodesic, which is a solution of a particular initial value problem on a manifold, changes as the initial conditions of this differential equation are varied.
Our main contribution, \cref{thm:manifold_error_bound}, consists of showing that, in this framework, an exact error analysis in principle only requires knowledge of a lower bound on the sectional curvature.

Alternatively, if $\operatorname{exp}_p$ and $\operatorname{log}_p$ are not known explicitly or are expensive to compute, a \emph{retraction}~\cite{Absil08,Boumal2023} may be used instead.
\new{%
In \cref{cor:retraction}, we show how the error analysis can be extended to this case as well.
}

\Cref{alg:manifold_approximation_scheme} is a detailed description of a concrete algorithm that results from applying this template to maps with domain $R = [-1,1]^m$. 
We implement it in a Julia package \texttt{ManiFactor.jl}.
\Cref{fig:map_into_sphere} is a small example of approximating the map $f \from [-1, 1]^{2} \to S^{2}$, defined by stereographic projection of $(x, y) \mapsto (x^{2} - y^{2}, 2 x y)$.

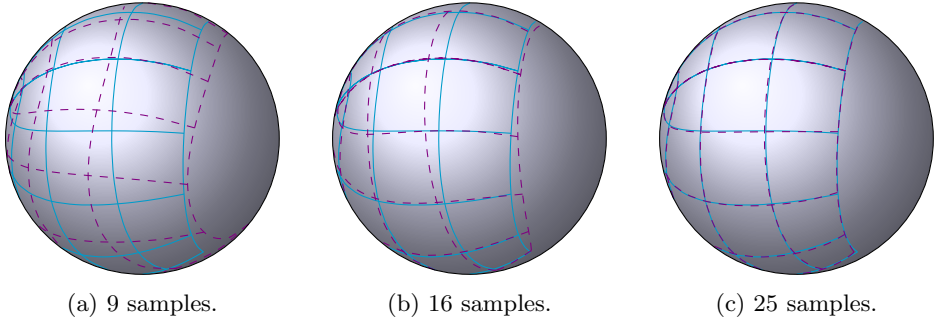
\begin{figure}[t]
	\tikzset{f/.style={cyan!80!black}} 
	\tikzset{fhat/.style={violet, dashed}} 
	\centering
	\begin{subfigure}{0.32\textwidth}
		\centering
		\begin{tikzpicture}
		    \node (O) at (0, 0) {}; 
		
			\shade [myspherestyle] (O) circle (\r);
			
			\draw [f] plot [smooth] coordinates {
			(0.35461990849812774 * \r, 0.49768891227433143 * \r)
			(0.15480834419220604 * \r, 0.5512774312461516 * \r)
			(-0.06541160934793494 * \r, 0.5819005074323589 * \r)
			(-0.28657125486494506 * \r, 0.5834507603275669 * \r)
			(-0.48671233758953414 * \r, 0.5563270843617839 * \r)
			(-0.6492930574882867 * \r, 0.5080066837973016 * \r)
			(-0.7681725941637781 * \r, 0.4501008376053993 * \r)
			(-0.8468128471549871 * \r, 0.39418444669223673 * \r)
			(-0.8937165501261771 * \r, 0.3490669367256721 * \r)
			(-0.9177194132994653 * \r, 0.3201975495656527 * \r)
			(-0.9249960830190598 * \r, 0.31030682175729835 * \r)
			(-0.9177194132994653 * \r, 0.3201975495656527 * \r)
			(-0.8937165501261771 * \r, 0.3490669367256721 * \r)
			(-0.8468128471549871 * \r, 0.39418444669223673 * \r)
			(-0.7681725941637781 * \r, 0.4501008376053993 * \r)
			(-0.6492930574882867 * \r, 0.5080066837973016 * \r)
			(-0.48671233758953414 * \r, 0.5563270843617839 * \r)
			(-0.28657125486494506 * \r, 0.5834507603275669 * \r)
			(-0.06541160934793494 * \r, 0.5819005074323589 * \r)
			(0.15480834419220604 * \r, 0.5512774312461516 * \r)
			(0.35461990849812774 * \r, 0.49768891227433143 * \r)
			}; \draw [f] plot [smooth] coordinates {
			(0.3042263876867834 * \r, 0.03908975864860598 * \r)
			(0.1058830296102599 * \r, 0.05063188668897822 * \r)
			(-0.11255611269639901 * \r, 0.05627089090515669 * \r)
			(-0.33324619461168503 * \r, 0.05680439515294622 * \r)
			(-0.5352267273744729 * \r, 0.055215102418578854 * \r)
			(-0.701423867353378 * \r, 0.05617684786182475 * \r)
			(-0.8238582401645557 * \r, 0.06480658542670553 * \r)
			(-0.9039028511927052 * \r, 0.08549061080900322 * \r)
			(-0.9485659966221153 * \r, 0.12138579212550502 * \r)
			(-0.9659400052572956 * \r, 0.17450640740528664 * \r)
			(-0.9619526779836589 * \r, 0.24592293523475672 * \r)
			(-0.9387866010184561 * \r, 0.3356489599184813 * \r)
			(-0.8945413345878729 * \r, 0.441996492068909 * \r)
			(-0.8238755970619736 * \r, 0.5604143399966105 * \r)
			(-0.7198240647801982 * \r, 0.6821999856273893 * \r)
			(-0.5770856270351098 * \r, 0.7940652261899315 * \r)
			(-0.39625692902234794 * \r, 0.8799348256521664 * \r)
			(-0.18685891989908457 * \r, 0.9255433162528415 * \r)
			(0.033656221238566736 * \r, 0.9239716216109956 * \r)
			(0.24536594919043359 * \r, 0.878396740607072 * \r)
			(0.43244306852874065 * \r, 0.7999948386860298 * \r)
			}; \draw [f] plot [smooth] coordinates {
			(0.3041088258523764 * \r, -0.4089404095123532 * \r)
			(0.1251830144566918 * \r, -0.44210226215649395 * \r)
			(-0.07481289637301547 * \r, -0.47051298734524666 * \r)
			(-0.28387411289597453 * \r, -0.48762821292919006 * \r)
			(-0.48579945818620973 * \r, -0.48655647128291857 * \r)
			(-0.6641821670909152 * \r, -0.46177413916498977 * \r)
			(-0.8068832653648643 * \r, -0.41040834809421456 * \r)
			(-0.9084199484591851 * \r, -0.3323716143153083 * \r)
			(-0.9692247326317776 * \r, -0.22949768583838578 * \r)
			}; \draw [f] plot [smooth] coordinates {
			(0.35861666010137755 * \r, -0.7094901291822779 * \r)
			(0.216561361065778 * \r, -0.7680047907343848 * \r)
			(0.05722249002053528 * \r, -0.8197573890814775 * \r)
			(-0.11308477221691995 * \r, -0.8570690995780968 * \r)
			(-0.2847666570358103 * \r, -0.8715455471533294 * \r)
			(-0.44617038323884395 * \r, -0.8556552563207178 * \r)
			(-0.5856665494756039 * \r, -0.8043673942387144 * \r)
			}; \draw [f] plot [smooth] coordinates {
			(0.4458131325319566 * \r, -0.8341423252737478 * \r)
			(0.3459455325115729 * \r, -0.8912848597210525 * \r)
			(0.23601717764322283 * \r, -0.9416910645479524 * \r)
			(0.11947753291676869 * \r, -0.9795043186589512 * \r)
			(0.0015474583860723146 * \r, -0.9982289374418344 * \r)
			};
			
			\draw [f] plot [smooth] coordinates {
			(-0.9734502512412145 * \r, 0.21645603838080008 * \r)
			(-0.9503650978095842 * \r, 0.2695639155955872 * \r)
			(-0.9319027913015903 * \r, 0.30029198363127285 * \r)
			(-0.9249960830190598 * \r, 0.31030682175729835 * \r)
			(-0.9319027913015903 * \r, 0.30029198363127285 * \r)
			(-0.9503650978095842 * \r, 0.2695639155955872 * \r)
			(-0.9734502512412145 * \r, 0.21645603838080008 * \r)
			}; \draw [f] plot [smooth] coordinates {
			(-0.8933051772026108 * \r, -0.4473276762951144 * \r)
			(-0.9437264382844831 * \r, -0.2838810566103164 * \r)
			(-0.9556361731823627 * \r, -0.12013626344817188 * \r)
			(-0.9420244751112437 * \r, 0.03198907553846231 * \r)
			(-0.9169820338659903 * \r, 0.16571209105209736 * \r)
			(-0.8918592116441181 * \r, 0.27847459619059345 * \r)
			(-0.8736426618596116 * \r, 0.36986103012019333 * \r)
			(-0.8647058074052786 * \r, 0.4396171487037881 * \r)
			(-0.8629573718317479 * \r, 0.48632279099550135 * \r)
			}; \draw [f] plot [smooth] coordinates {
			(-0.24887452699923499 * \r, -0.9670707889480283 * \r)
			(-0.3835049655974334 * \r, -0.903744784202541 * \r)
			(-0.5038502773817606 * \r, -0.7963522732699487 * \r)
			(-0.5995176709543579 * \r, -0.6461535279119586 * \r)
			(-0.6641821670909152 * \r, -0.46177413916498977 * \r)
			(-0.6975919657678042 * \r, -0.25702421281576293 * \r)
			(-0.7049807754233284 * \r, -0.04685628278467713 * \r)
			(-0.6944523655637698 * \r, 0.15612973442767675 * \r)
			(-0.6739836278079241 * \r, 0.34304158902523585 * \r)
			(-0.6492930574882867 * \r, 0.5080066837973016 * \r)
			(-0.6228481288947172 * \r, 0.6465064785996874 * \r)
			(-0.5937379321355996 * \r, 0.7538220434576267 * \r)
			(-0.5581747733444302 * \r, 0.8243675862804476 * \r)
			}; \draw [f] plot [smooth] coordinates {
			(0.1782878076528404 * \r, -0.9625714375805758 * \r)
			(0.09481954969973533 * \r, -0.9388625125517499 * \r)
			(0.01227895333316184 * \r, -0.8828330916905951 * \r)
			(-0.06429365917220373 * \r, -0.7885935419515656 * \r)
			(-0.12958504925707726 * \r, -0.6537569692151357 * \r)
			(-0.17916245213437132 * \r, -0.48092478119651527 * \r)
			(-0.21054453359558659 * \r, -0.2779049380157696 * \r)
			(-0.2236637488749284 * \r, -0.056422853605624324 * \r)
			(-0.22048857606976363 * \r, 0.1700503906008594 * \r)
			(-0.2040374734996862 * \r, 0.38829697279872016 * \r)
			(-0.17728124890467267 * \r, 0.5864632063576696 * \r)
			(-0.14236321065241325 * \r, 0.7543044189429402 * \r)
			(-0.10033008382571201 * \r, 0.88313395007027 * \r)
			(-0.05135858529677241 * \r, 0.9661264233087006 * \r)
			(0.004667165987406435 * \r, 0.9992742435139426 * \r)
			}; \draw [f] plot [smooth] coordinates {
			(0.4458131325319566 * \r, -0.8341423252737478 * \r)
			(0.4089728542436598 * \r, -0.8028295912922291 * \r)
			(0.37442043768152067 * \r, -0.7474499452734952 * \r)
			(0.3441365849139677 * \r, -0.6642080915108389 * \r)
			(0.32012794889533924 * \r, -0.5511193026080723 * \r)
			(0.3041088258523764 * \r, -0.4089404095123532 * \r)
			(0.2971824087693057 * \r, -0.24174672979927664 * \r)
			(0.29963128776713077 * \r, -0.056904273644859726 * \r)
			(0.3108980531376256 * \r, 0.13560887907953117 * \r)
			(0.3297597056854791 * \r, 0.3243442221059205 * \r)
			(0.35461990849812774 * \r, 0.49768891227433143 * \r)
			(0.3838087712166306 * \r, 0.6450997452084247 * \r)
			(0.41580247020092237 * \r, 0.7581667501588631 * \r)
			(0.44932899437946006 * \r, 0.8314804878642604 * \r)
			(0.48337701531684873 * \r, 0.863229791720793 * \r)
			};
			
			\draw [fhat] plot [smooth] coordinates {
			(0.47927771104756633 * \r, 0.4204564748630433 * \r)
			(0.1813945447059405 * \r, 0.5397220407416308 * \r)
			(-0.10369701702203826 * \r, 0.592957929941559 * \r)
			(-0.34974973421218547 * \r, 0.593694346636251 * \r)
			(-0.545604168970977 * \r, 0.5589063381056432 * \r)
			(-0.6910302317159831 * \r, 0.5049559587422567 * \r)
			(-0.792260437465486 * \r, 0.4454510256537357 * \r)
			(-0.8582395036634926 * \r, 0.39056106750105496 * \r)
			(-0.8979331257915091 * \r, 0.34721859016543777 * \r)
			(-0.918629840921787 * \r, 0.31971051248242643 * \r)
			(-0.9249960830190599 * \r, 0.3103068217572973 * \r)
			}; \draw [fhat] plot [smooth] coordinates {
			(0.38161379619900404 * \r, 0.07862520723323531 * \r)
			(0.07803088243191615 * \r, 0.15933883982124475 * \r)
			(-0.205746406346256 * \r, 0.20209999857778246 * \r)
			(-0.44584813628460906 * \r, 0.2154491126448478 * \r)
			(-0.6331650062464116 * \r, 0.21037626807872323 * \r)
			(-0.7689694926814542 * \r, 0.197573017807344 * \r)
			(-0.8604282667627332 * \r, 0.1860131097715584 * \r)
			(-0.9169343193474782 * \r, 0.18252224829940727 * \r)
			(-0.9475325121477751 * \r, 0.19192126737363008 * \r)
			(-0.9593338194160526 * \r, 0.21738445719528668 * \r)
			(-0.956664180284561 * \r, 0.2607556946536388 * \r)
			(-0.9406954503015034 * \r, 0.32266066403818433 * \r)
			(-0.9093899592039757 * \r, 0.4023287744784985 * \r)
			(-0.8577101771541621 * \r, 0.4971003338744247 * \r)
			(-0.7781719715411466 * \r, 0.6016615565418277 * \r)
			(-0.6619302922365717 * \r, 0.7071433827930639 * \r)
			(-0.5006479070247067 * \r, 0.8003550880565748 * \r)
			(-0.2893602549153718 * \r, 0.8635969692747467 * \r)
			(-0.030342273610653225 * \r, 0.8756690724015043 * \r)
			(0.26246495181407614 * \r, 0.814771838072429 * \r)
			(0.5596212591737388 * \r, 0.6638243424887111 * \r)
			}; \draw [fhat] plot [smooth] coordinates {
			(0.3302549204901353 * \r, -0.3380046768602053 * \r)
			(0.020071851005824336 * \r, -0.30723328657104443 * \r)
			(-0.26859218131988094 * \r, -0.2826258562314842 * \r)
			(-0.511463800640301 * \r, -0.26179135836465517 * \r)
			(-0.6992839981174169 * \r, -0.2406648760082472 * \r)
			(-0.8333795041823973 * \r, -0.21493966328875047 * \r)
			(-0.9210929375757995 * \r, -0.18081163553892604 * \r)
			(-0.9720332525212909 * \r, -0.13521108166480023 * \r)
			(-0.995439847626628 * \r, -0.07575928282174366 * \r)
			}; \draw [fhat] plot [smooth] coordinates { 
			(0.03939560202091008 * \r, 0.9990391067676854 * \r)
			(0.3327278843634063 * \r, 0.9390634695131548 * \r)
			(0.6222896183760183 * \r, 0.7730807026536496 * \r)
			}; \draw [fhat] plot [smooth] coordinates {
			(0.44883624351767926 * \r, -0.689559278659502 * \r)
			(0.15578864293569164 * \r, -0.7398386590467241 * \r)
			(-0.12666962290867434 * \r, -0.7616251509956314 * \r)
			(-0.3694368400632184 * \r, -0.7580041469002933 * \r)
			(-0.5592404258043677 * \r, -0.7319550356132061 * \r)
			(-0.6946366493268553 * \r, -0.6861743650204732 * \r)
			(-0.7813985971311831 * \r, -0.622914791523637 * \r)
			}; \draw [fhat] plot [smooth] coordinates {
			(0.7758621700749558 * \r, -0.6293292406430886 * \r)
			(0.5788847209327349 * \r, -0.8008721947169262 * \r)
			(0.35964236096350216 * \r, -0.9175234381684894 * \r)
			(0.1554080084147471 * \r, -0.981693060290067 * \r)
			};
			
			\draw [fhat] plot [smooth] coordinates {
			(-0.9671827927254403 * \r, 0.23768456591443227 * \r)
			(-0.9464953384884737 * \r, 0.27907352340786573 * \r)
			(-0.9307774335523005 * \r, 0.30266446999466107 * \r)
			(-0.9249960830190599 * \r, 0.3103068217572973 * \r)
			(-0.9307774335523005 * \r, 0.302664469994661 * \r)
			(-0.9464953384884737 * \r, 0.27907352340786573 * \r)
			(-0.9671827927254403 * \r, 0.2376845659144322 * \r)
			}; \draw [fhat] plot [smooth] coordinates {
			(-0.9665783852352087 * \r, -0.2563678543658543 * \r)
			(-0.9866654028824842 * \r, -0.1073414832129454 * \r)
			(-0.9759599123740386 * \r, 0.024116734577775634 * \r)
			(-0.9498810809275359 * \r, 0.13633372938821992 * \r)
			(-0.9204185105391839 * \r, 0.22994694710128133 * \r)
			(-0.8958444102639365 * \r, 0.3064528448836388 * \r)
			(-0.8808630700072878 * \r, 0.3671242994338634 * \r)
			(-0.8768493397193343 * \r, 0.4122722699286095 * \r)
			(-0.8819773043152976 * \r, 0.4408099911531401 * \r)
			(-0.8911822267775997 * \r, 0.4501083777985409 * \r)
			}; \draw [fhat] plot [smooth] coordinates {
			(-0.620563184575695 * \r, -0.7713426628214844 * \r)
			(-0.7508094416627282 * \r, -0.5947915333583111 * \r)
			(-0.816377437203666 * \r, -0.40387414904199537 * \r)
			(-0.8333795041823968 * \r, -0.21493966328875097 * \r)
			(-0.8186900886536974 * \r, -0.03763759260215116 * \r)
			(-0.7872155307944114 * \r, 0.12351219092205121 * \r)
			(-0.7506225954425807 * \r, 0.26731066247488533 * \r)
			(-0.7170070223614338 * \r, 0.39421257555878486 * \r)
			(-0.6910302317159831 * \r, 0.5049559587422567 * \r)
			(-0.674179906887686 * \r, 0.5995154046371126 * \r)
			(-0.6649552025123273 * \r, 0.6763629180216009 * \r)
			(-0.6589152550995118 * \r, 0.7320374344087037 * \r)
			(-0.6486594664262864 * \r, 0.7610783030011569 * \r)
			}; \draw [fhat] plot [smooth] coordinates {
			(0.25393863505393194 * \r, -0.9558255344998174 * \r)
			(0.0071585176173880605 * \r, -0.9435513070741341 * \r)
			(-0.1831702754946848 * \r, -0.8393512099818102 * \r)
			(-0.3093280432554883 * \r, -0.6750098510811064 * \r)
			(-0.3764718758348421 * \r, -0.4787225214885763 * \r)
			(-0.39663491378815713 * \r, -0.2719721839876701 * \r)
			(-0.384017647606862 * \r, -0.06903175947420748 * \r)
			(-0.3519447962895268 * \r, 0.12199634431850762 * \r)
			(-0.3112832655058834 * \r, 0.2974738471111025 * \r)
			(-0.26988540160761165 * \r, 0.45641374932812473 * \r)
			(-0.2326107441326451 * \r, 0.5988507367624822 * \r)
			(-0.20157545668762633 * \r, 0.7245039576527059 * \r)
			(-0.17640797909949663 * \r, 0.8317542358496433 * \r)
			(-0.1544172797180734 * \r, 0.9169469824957727 * \r)
			(-0.13069776733715888 * \r, 0.9740740944001123 * \r)
			}; \draw [fhat] plot [smooth] coordinates {
			(0.7758621700749588 * \r, -0.6293292406430855 * \r)
			(0.6309819382819835 * \r, -0.7299586377500763 * \r)
			(0.5013936935719756 * \r, -0.7242446267061193 * \r)
			(0.4060338082522868 * \r, -0.6385199578519238 * \r)
			(0.35020669848505803 * \r, -0.5014359830967872 * \r)
			(0.3302549204901369 * \r, -0.338004676860206 * \r)
			(0.3380353334567161 * \r, -0.16697245103544384 * \r)
			(0.36426630851724046 * \r, -0.000560279503670047 * \r)
			(0.400563534519467 * \r, 0.15445043250329504 * \r)
			(0.4403891924698695 * \r, 0.2950796197714636 * \r)
			(0.47927771104756667 * \r, 0.420456474863043 * \r)
			(0.514680391387928 * \r, 0.5303252725491908 * \r)
			(0.5456803576127625 * \r, 0.6238438554348996 * \r)
			(0.5727226506815938 * \r, 0.6987016929048178 * \r)
			(0.5974052125458165 * \r, 0.7505870085518442 * \r)
			(0.6222896183760183 * \r, 0.7730807026536496 * \r)
			};
		
		    \draw (O) circle (\r);
		\end{tikzpicture}
		\caption{$9$ samples.}
	\end{subfigure}
	\begin{subfigure}{0.32\textwidth}
		\centering
		\begin{tikzpicture}
		    \node (O) at (0, 0) {}; 
		
			\shade [myspherestyle] (O) circle (\r);
		
			\draw [f] plot [smooth] coordinates {
			(0.35461990849812774 * \r, 0.49768891227433143 * \r)
			(0.15480834419220604 * \r, 0.5512774312461516 * \r)
			(-0.06541160934793494 * \r, 0.5819005074323589 * \r)
			(-0.28657125486494506 * \r, 0.5834507603275669 * \r)
			(-0.48671233758953414 * \r, 0.5563270843617839 * \r)
			(-0.6492930574882867 * \r, 0.5080066837973016 * \r)
			(-0.7681725941637781 * \r, 0.4501008376053993 * \r)
			(-0.8468128471549871 * \r, 0.39418444669223673 * \r)
			(-0.8937165501261771 * \r, 0.3490669367256721 * \r)
			(-0.9177194132994653 * \r, 0.3201975495656527 * \r)
			(-0.9249960830190598 * \r, 0.31030682175729835 * \r)
			(-0.9177194132994653 * \r, 0.3201975495656527 * \r)
			(-0.8937165501261771 * \r, 0.3490669367256721 * \r)
			(-0.8468128471549871 * \r, 0.39418444669223673 * \r)
			(-0.7681725941637781 * \r, 0.4501008376053993 * \r)
			(-0.6492930574882867 * \r, 0.5080066837973016 * \r)
			(-0.48671233758953414 * \r, 0.5563270843617839 * \r)
			(-0.28657125486494506 * \r, 0.5834507603275669 * \r)
			(-0.06541160934793494 * \r, 0.5819005074323589 * \r)
			(0.15480834419220604 * \r, 0.5512774312461516 * \r)
			(0.35461990849812774 * \r, 0.49768891227433143 * \r)
			}; \draw [f] plot [smooth] coordinates {
			(0.3042263876867834 * \r, 0.03908975864860598 * \r)
			(0.1058830296102599 * \r, 0.05063188668897822 * \r)
			(-0.11255611269639901 * \r, 0.05627089090515669 * \r)
			(-0.33324619461168503 * \r, 0.05680439515294622 * \r)
			(-0.5352267273744729 * \r, 0.055215102418578854 * \r)
			(-0.701423867353378 * \r, 0.05617684786182475 * \r)
			(-0.8238582401645557 * \r, 0.06480658542670553 * \r)
			(-0.9039028511927052 * \r, 0.08549061080900322 * \r)
			(-0.9485659966221153 * \r, 0.12138579212550502 * \r)
			(-0.9659400052572956 * \r, 0.17450640740528664 * \r)
			(-0.9619526779836589 * \r, 0.24592293523475672 * \r)
			(-0.9387866010184561 * \r, 0.3356489599184813 * \r)
			(-0.8945413345878729 * \r, 0.441996492068909 * \r)
			(-0.8238755970619736 * \r, 0.5604143399966105 * \r)
			(-0.7198240647801982 * \r, 0.6821999856273893 * \r)
			(-0.5770856270351098 * \r, 0.7940652261899315 * \r)
			(-0.39625692902234794 * \r, 0.8799348256521664 * \r)
			(-0.18685891989908457 * \r, 0.9255433162528415 * \r)
			(0.033656221238566736 * \r, 0.9239716216109956 * \r)
			(0.24536594919043359 * \r, 0.878396740607072 * \r)
			(0.43244306852874065 * \r, 0.7999948386860298 * \r)
			}; \draw [f] plot [smooth] coordinates {
			(0.3041088258523764 * \r, -0.4089404095123532 * \r)
			(0.1251830144566918 * \r, -0.44210226215649395 * \r)
			(-0.07481289637301547 * \r, -0.47051298734524666 * \r)
			(-0.28387411289597453 * \r, -0.48762821292919006 * \r)
			(-0.48579945818620973 * \r, -0.48655647128291857 * \r)
			(-0.6641821670909152 * \r, -0.46177413916498977 * \r)
			(-0.8068832653648643 * \r, -0.41040834809421456 * \r)
			(-0.9084199484591851 * \r, -0.3323716143153083 * \r)
			(-0.9692247326317776 * \r, -0.22949768583838578 * \r)
			}; \draw [f] plot [smooth] coordinates {
			(0.35861666010137755 * \r, -0.7094901291822779 * \r)
			(0.216561361065778 * \r, -0.7680047907343848 * \r)
			(0.05722249002053528 * \r, -0.8197573890814775 * \r)
			(-0.11308477221691995 * \r, -0.8570690995780968 * \r)
			(-0.2847666570358103 * \r, -0.8715455471533294 * \r)
			(-0.44617038323884395 * \r, -0.8556552563207178 * \r)
			(-0.5856665494756039 * \r, -0.8043673942387144 * \r)
			}; \draw [f] plot [smooth] coordinates {
			(0.4458131325319566 * \r, -0.8341423252737478 * \r)
			(0.3459455325115729 * \r, -0.8912848597210525 * \r)
			(0.23601717764322283 * \r, -0.9416910645479524 * \r)
			(0.11947753291676869 * \r, -0.9795043186589512 * \r)
			(0.0015474583860723146 * \r, -0.9982289374418344 * \r)
			};
			
			\draw [f] plot [smooth] coordinates {
			(-0.9734502512412145 * \r, 0.21645603838080008 * \r)
			(-0.9503650978095842 * \r, 0.2695639155955872 * \r)
			(-0.9319027913015903 * \r, 0.30029198363127285 * \r)
			(-0.9249960830190598 * \r, 0.31030682175729835 * \r)
			(-0.9319027913015903 * \r, 0.30029198363127285 * \r)
			(-0.9503650978095842 * \r, 0.2695639155955872 * \r)
			(-0.9734502512412145 * \r, 0.21645603838080008 * \r)
			}; \draw [f] plot [smooth] coordinates {
			(-0.8933051772026108 * \r, -0.4473276762951144 * \r)
			(-0.9437264382844831 * \r, -0.2838810566103164 * \r)
			(-0.9556361731823627 * \r, -0.12013626344817188 * \r)
			(-0.9420244751112437 * \r, 0.03198907553846231 * \r)
			(-0.9169820338659903 * \r, 0.16571209105209736 * \r)
			(-0.8918592116441181 * \r, 0.27847459619059345 * \r)
			(-0.8736426618596116 * \r, 0.36986103012019333 * \r)
			(-0.8647058074052786 * \r, 0.4396171487037881 * \r)
			(-0.8629573718317479 * \r, 0.48632279099550135 * \r)
			}; \draw [f] plot [smooth] coordinates {
			(-0.24887452699923499 * \r, -0.9670707889480283 * \r)
			(-0.3835049655974334 * \r, -0.903744784202541 * \r)
			(-0.5038502773817606 * \r, -0.7963522732699487 * \r)
			(-0.5995176709543579 * \r, -0.6461535279119586 * \r)
			(-0.6641821670909152 * \r, -0.46177413916498977 * \r)
			(-0.6975919657678042 * \r, -0.25702421281576293 * \r)
			(-0.7049807754233284 * \r, -0.04685628278467713 * \r)
			(-0.6944523655637698 * \r, 0.15612973442767675 * \r)
			(-0.6739836278079241 * \r, 0.34304158902523585 * \r)
			(-0.6492930574882867 * \r, 0.5080066837973016 * \r)
			(-0.6228481288947172 * \r, 0.6465064785996874 * \r)
			(-0.5937379321355996 * \r, 0.7538220434576267 * \r)
			(-0.5581747733444302 * \r, 0.8243675862804476 * \r)
			}; \draw [f] plot [smooth] coordinates {
			(0.1782878076528404 * \r, -0.9625714375805758 * \r)
			(0.09481954969973533 * \r, -0.9388625125517499 * \r)
			(0.01227895333316184 * \r, -0.8828330916905951 * \r)
			(-0.06429365917220373 * \r, -0.7885935419515656 * \r)
			(-0.12958504925707726 * \r, -0.6537569692151357 * \r)
			(-0.17916245213437132 * \r, -0.48092478119651527 * \r)
			(-0.21054453359558659 * \r, -0.2779049380157696 * \r)
			(-0.2236637488749284 * \r, -0.056422853605624324 * \r)
			(-0.22048857606976363 * \r, 0.1700503906008594 * \r)
			(-0.2040374734996862 * \r, 0.38829697279872016 * \r)
			(-0.17728124890467267 * \r, 0.5864632063576696 * \r)
			(-0.14236321065241325 * \r, 0.7543044189429402 * \r)
			(-0.10033008382571201 * \r, 0.88313395007027 * \r)
			(-0.05135858529677241 * \r, 0.9661264233087006 * \r)
			(0.004667165987406435 * \r, 0.9992742435139426 * \r)
			}; \draw [f] plot [smooth] coordinates {
			(0.4458131325319566 * \r, -0.8341423252737478 * \r)
			(0.4089728542436598 * \r, -0.8028295912922291 * \r)
			(0.37442043768152067 * \r, -0.7474499452734952 * \r)
			(0.3441365849139677 * \r, -0.6642080915108389 * \r)
			(0.32012794889533924 * \r, -0.5511193026080723 * \r)
			(0.3041088258523764 * \r, -0.4089404095123532 * \r)
			(0.2971824087693057 * \r, -0.24174672979927664 * \r)
			(0.29963128776713077 * \r, -0.056904273644859726 * \r)
			(0.3108980531376256 * \r, 0.13560887907953117 * \r)
			(0.3297597056854791 * \r, 0.3243442221059205 * \r)
			(0.35461990849812774 * \r, 0.49768891227433143 * \r)
			(0.3838087712166306 * \r, 0.6450997452084247 * \r)
			(0.41580247020092237 * \r, 0.7581667501588631 * \r)
			(0.44932899437946006 * \r, 0.8314804878642604 * \r)
			(0.48337701531684873 * \r, 0.863229791720793 * \r)
			};
			
			\draw [fhat] plot [smooth] coordinates {
			(0.3740913799521763 * \r, 0.4710689588060385 * \r)
			(0.08658818749438835 * \r, 0.5624100401542647 * \r)
			(-0.17984463025902286 * \r, 0.5930313215060905 * \r)
			(-0.4057523258483986 * \r, 0.5775968837133927 * \r)
			(-0.5839685616337768 * \r, 0.5324748156745527 * \r)
			(-0.7159146191813496 * \r, 0.47266530005926166 * \r)
			(-0.8079125131941934 * \r, 0.4103096169368865 * \r)
			(-0.8681808081956575 * \r, 0.35433890218344116 * \r)
			(-0.9047041373996477 * \r, 0.3107844524791296 * \r)
			(-0.9238907984174164 * \r, 0.28335559207124167 * \r)
			(-0.9298214686145898 * \r, 0.2740107240727113 * \r)
			(-0.9238907984174164 * \r, 0.28335559207124167 * \r)
			(-0.9047041373996477 * \r, 0.31078445247912956 * \r)
			(-0.8681808081956573 * \r, 0.3543389021834411 * \r)
			(-0.8079125131941934 * \r, 0.41030961693688633 * \r)
			(-0.7159146191813494 * \r, 0.47266530005926144 * \r)
			(-0.5839685616337768 * \r, 0.5324748156745522 * \r)
			(-0.40575232584839827 * \r, 0.5775968837133925 * \r)
			(-0.17984463025902292 * \r, 0.5930313215060901 * \r)
			(0.08658818749438868 * \r, 0.562410040154264 * \r)
			(0.37409137995217634 * \r, 0.471068958806038 * \r)
			}; \draw [fhat] plot [smooth] coordinates {
			(0.3538012164124909 * \r, 0.014067852568600936 * \r)
			(0.06353069631759359 * \r, 0.0512592446807267 * \r)
			(-0.20370735390954262 * \r, 0.05822310757111568 * \r)
			(-0.4296686735523381 * \r, 0.04938703275050649 * \r)
			(-0.6088129556285553 * \r, 0.03721326068758507 * \r)
			(-0.7433977903706314 * \r, 0.031020897219370736 * \r)
			(-0.8394238533460471 * \r, 0.03704786183068681 * \r)
			(-0.9037944774911112 * \r, 0.05903794021928466 * \r)
			(-0.9425806770989338 * \r, 0.09889934829844765 * \r)
			(-0.9601161948974054 * \r, 0.15720144356870785 * \r)
			(-0.9586476915163188 * \r, 0.2334228386148942 * \r)
			(-0.9383353172964104 * \r, 0.3259481523810402 * \r)
			(-0.8974835074838028 * \r, 0.4318555599427611 * \r)
			(-0.8329553793078007 * \r, 0.5465648449109536 * \r)
			(-0.740777083220288 * \r, 0.6634407674700824 * \r)
			(-0.6169676692146004 * \r, 0.7734767702516068 * \r)
			(-0.4586328988753771 * \r, 0.8652197069964304 * \r)
			(-0.2653335474380561 * \r, 0.9251303435184939 * \r)
			(-0.040673828097347875 * \r, 0.9385915614733612 * \r)
			(0.20605221403641638 * \r, 0.8917522693932138 * \r)
			(0.45846966794387034 * \r, 0.7742978243258004 * \r)
			}; \draw [fhat] plot [smooth] coordinates {
			(0.38806253016670966 * \r, -0.4090039804150277 * \r)
			(0.12478227573722034 * \r, -0.43518504281502135 * \r)
			(-0.12334570890076182 * \r, -0.4616303199507321 * \r)
			(-0.3434048112170245 * \r, -0.47606105559939443 * \r)
			(-0.5314431401980602 * \r, -0.47037927594855694 * \r)
			(-0.6874733146197283 * \r, -0.4399410889125639 * \r)
			(-0.8123883829818266 * \r, -0.3826598598852806 * \r)
			(-0.9064460185583224 * \r, -0.2983995231918955 * \r)
			(-0.9688379474034818 * \r, -0.18871435170568707 * \r)
			(-0.9979326678333909 * \r, -0.056812681542963706 * \r)
			}; \draw [fhat] plot [smooth] coordinates { 
			(0.5629564061711463 * \r, 0.8264700003192975 * \r)
			}; \draw [fhat] plot [smooth] coordinates {
			(0.43282830212611867 * \r, -0.6841244927283157 * \r)
			(0.22361076404842417 * \r, -0.7596074139432417 * \r)
			(0.02605839184977063 * \r, -0.8186232588082516 * \r)
			(-0.15754315704490046 * \r, -0.8531145873278406 * \r)
			(-0.32844154646426127 * \r, -0.8581339950973536 * \r)
			(-0.4864877320053647 * \r, -0.8300588515137411 * \r)
			(-0.6281260285917338 * \r, -0.7661989793302796 * \r)
			}; \draw [fhat] plot [smooth] coordinates {
			(0.4526084585115908 * \r, -0.8249866570583231 * \r)
			(0.3352964545157068 * \r, -0.9014128298368487 * \r)
			(0.23968917641131854 * \r, -0.9532259154206235 * \r)
			(0.15174484671561767 * \r, -0.9845992316433761 * \r)
			};
			
			\draw [fhat] plot [smooth] coordinates {
			(-0.9860515540228791 * \r, 0.16328216478214164 * \r)
			(-0.9685699829525785 * \r, 0.21443196716263163 * \r)
			(-0.949266488226039 * \r, 0.2484585545550785 * \r)
			(-0.9350117719157399 * \r, 0.2677682415606809 * \r)
			(-0.9298214686145898 * \r, 0.2740107240727113 * \r)
			(-0.93501177191574 * \r, 0.2677682415606808 * \r)
			(-0.9492664882260391 * \r, 0.2484585545550782 * \r)
			(-0.9685699829525786 * \r, 0.21443196716263124 * \r)
			(-0.9860515540228791 * \r, 0.16328216478214103 * \r)
			}; \draw [fhat] plot [smooth] coordinates {
			(-0.9135049559999202 * \r, -0.38897542419049086 * \r)
			(-0.9416888315865963 * \r, -0.2465736456028486 * \r)
			(-0.9446933338190164 * \r, -0.11032072141919719 * \r)
			(-0.9338881564179919 * \r, 0.016929897170139263 * \r)
			(-0.9177378164039888 * \r, 0.1336363751897507 * \r)
			(-0.90180026316402 * \r, 0.23868524545588116 * \r)
			(-0.8889619645661127 * \r, 0.33073062297702405 * \r)
			(-0.879717296320549 * \r, 0.4077519352218826 * \r)
			(-0.8724066666488728 * \r, 0.4668276490274501 * \r)
			(-0.8634072843891247 * \r, 0.5041377294178065 * \r)
			}; \draw [fhat] plot [smooth] coordinates {
			(-0.2521374694235695 * \r, -0.967164146375153 * \r)
			(-0.41959589238317335 * \r, -0.8867030658564091 * \r)
			(-0.5432097402704762 * \r, -0.7645727746063432 * \r)
			(-0.6299643950051431 * \r, -0.6124799548446647 * \r)
			(-0.6874733146197283 * \r, -0.43994108891256356 * \r)
			(-0.7224351356710133 * \r, -0.2548652963063601 * \r)
			(-0.7399943933701603 * \r, -0.06425828266516198 * \r)
			(-0.7436337408621903 * \r, 0.12515783741190933 * \r)
			(-0.7353432210544046 * \r, 0.3065221344817861 * \r)
			(-0.7159146191813494 * \r, 0.47266530005926144 * \r)
			(-0.685279497225061 * \r, 0.6162081704133522 * \r)
			(-0.6428503638538879 * \r, 0.7299097674900334 * \r)
			(-0.5878438959103854 * \r, 0.8072140832274499 * \r)
			}; \draw [fhat] plot [smooth] coordinates {
			(0.19556045280636414 * \r, -0.9712224017781473 * \r)
			(0.07659312884008201 * \r, -0.950931083248545 * \r)
			(-0.023437146197580805 * \r, -0.8865738769914475 * \r)
			(-0.10783200404033952 * \r, -0.7825641522532498 * \r)
			(-0.17884171323065853 * \r, -0.6426357433101583 * \r)
			(-0.23727269311154553 * \r, -0.4709458519673967 * \r)
			(-0.28257850479830515 * \r, -0.2731100380558884 * \r)
			(-0.31322622309247883 * \r, -0.056929792002328755 * \r)
			(-0.32722154801689407 * \r, 0.1673693677983813 * \r)
			(-0.32270864905065727 * \r, 0.3874957497046644 * \r)
			(-0.2985590462200922 * \r, 0.590043418539118 * \r)
			(-0.25485407870839744 * \r, 0.7619050294079233 * \r)
			(-0.1931687051793975 * \r, 0.8918891920161442 * \r)
			(-0.1165917928634409 * \r, 0.9722110794571308 * \r)
			}; \draw [fhat] plot [smooth] coordinates {
			(0.4526084585115897 * \r, -0.8249866570583235 * \r)
			(0.4486528040667601 * \r, -0.7806452176293388 * \r)
			(0.4393987887228389 * \r, -0.7210519572902017 * \r)
			(0.42513673557218795 * \r, -0.6416847430139384 * \r)
			(0.4072722900586033 * \r, -0.5384244323601216 * \r)
			(0.3880625301667095 * \r, -0.40900398041502783 * \r)
			(0.3703976714189348 * \r, -0.2542441872737781 * \r)
			(0.3574895724484635 * \r, -0.07883005382403196 * \r)
			(0.3524155696231388 * \r, 0.10862262838765326 * \r)
			(0.3575622773185057 * \r, 0.296357208597251 * \r)
			(0.37409137995217634 * \r, 0.471068958806038 * \r)
			(0.40158128049447606 * \r, 0.6200104620594616 * \r)
			(0.43796909911426696 * \r, 0.7331373363669472 * \r)
			(0.4798333986842207 * \r, 0.8047758502796658 * \r)
			(0.5229492747907767 * \r, 0.8344145147426859 * \r)
			(0.5629564061711463 * \r, 0.8264700003192975 * \r)
			};
		
		    \draw (O) circle (\r);
		\end{tikzpicture}
		\caption{$16$ samples.}
	\end{subfigure}
	\begin{subfigure}{0.32\textwidth}
		\centering
		\begin{tikzpicture}
		    \node (O) at (0, 0) {}; 
		
			\shade [myspherestyle] (O) circle (\r);
			
			\draw [f] plot [smooth] coordinates {
			(0.35461990849812774 * \r, 0.49768891227433143 * \r)
			(0.15480834419220604 * \r, 0.5512774312461516 * \r)
			(-0.06541160934793494 * \r, 0.5819005074323589 * \r)
			(-0.28657125486494506 * \r, 0.5834507603275669 * \r)
			(-0.48671233758953414 * \r, 0.5563270843617839 * \r)
			(-0.6492930574882867 * \r, 0.5080066837973016 * \r)
			(-0.7681725941637781 * \r, 0.4501008376053993 * \r)
			(-0.8468128471549871 * \r, 0.39418444669223673 * \r)
			(-0.8937165501261771 * \r, 0.3490669367256721 * \r)
			(-0.9177194132994653 * \r, 0.3201975495656527 * \r)
			(-0.9249960830190598 * \r, 0.31030682175729835 * \r)
			(-0.9177194132994653 * \r, 0.3201975495656527 * \r)
			(-0.8937165501261771 * \r, 0.3490669367256721 * \r)
			(-0.8468128471549871 * \r, 0.39418444669223673 * \r)
			(-0.7681725941637781 * \r, 0.4501008376053993 * \r)
			(-0.6492930574882867 * \r, 0.5080066837973016 * \r)
			(-0.48671233758953414 * \r, 0.5563270843617839 * \r)
			(-0.28657125486494506 * \r, 0.5834507603275669 * \r)
			(-0.06541160934793494 * \r, 0.5819005074323589 * \r)
			(0.15480834419220604 * \r, 0.5512774312461516 * \r)
			(0.35461990849812774 * \r, 0.49768891227433143 * \r)
			}; \draw [f] plot [smooth] coordinates {
			(0.3042263876867834 * \r, 0.03908975864860598 * \r)
			(0.1058830296102599 * \r, 0.05063188668897822 * \r)
			(-0.11255611269639901 * \r, 0.05627089090515669 * \r)
			(-0.33324619461168503 * \r, 0.05680439515294622 * \r)
			(-0.5352267273744729 * \r, 0.055215102418578854 * \r)
			(-0.701423867353378 * \r, 0.05617684786182475 * \r)
			(-0.8238582401645557 * \r, 0.06480658542670553 * \r)
			(-0.9039028511927052 * \r, 0.08549061080900322 * \r)
			(-0.9485659966221153 * \r, 0.12138579212550502 * \r)
			(-0.9659400052572956 * \r, 0.17450640740528664 * \r)
			(-0.9619526779836589 * \r, 0.24592293523475672 * \r)
			(-0.9387866010184561 * \r, 0.3356489599184813 * \r)
			(-0.8945413345878729 * \r, 0.441996492068909 * \r)
			(-0.8238755970619736 * \r, 0.5604143399966105 * \r)
			(-0.7198240647801982 * \r, 0.6821999856273893 * \r)
			(-0.5770856270351098 * \r, 0.7940652261899315 * \r)
			(-0.39625692902234794 * \r, 0.8799348256521664 * \r)
			(-0.18685891989908457 * \r, 0.9255433162528415 * \r)
			(0.033656221238566736 * \r, 0.9239716216109956 * \r)
			(0.24536594919043359 * \r, 0.878396740607072 * \r)
			(0.43244306852874065 * \r, 0.7999948386860298 * \r)
			}; \draw [f] plot [smooth] coordinates {
			(0.3041088258523764 * \r, -0.4089404095123532 * \r)
			(0.1251830144566918 * \r, -0.44210226215649395 * \r)
			(-0.07481289637301547 * \r, -0.47051298734524666 * \r)
			(-0.28387411289597453 * \r, -0.48762821292919006 * \r)
			(-0.48579945818620973 * \r, -0.48655647128291857 * \r)
			(-0.6641821670909152 * \r, -0.46177413916498977 * \r)
			(-0.8068832653648643 * \r, -0.41040834809421456 * \r)
			(-0.9084199484591851 * \r, -0.3323716143153083 * \r)
			(-0.9692247326317776 * \r, -0.22949768583838578 * \r)
			}; \draw [f] plot [smooth] coordinates {
			(0.35861666010137755 * \r, -0.7094901291822779 * \r)
			(0.216561361065778 * \r, -0.7680047907343848 * \r)
			(0.05722249002053528 * \r, -0.8197573890814775 * \r)
			(-0.11308477221691995 * \r, -0.8570690995780968 * \r)
			(-0.2847666570358103 * \r, -0.8715455471533294 * \r)
			(-0.44617038323884395 * \r, -0.8556552563207178 * \r)
			(-0.5856665494756039 * \r, -0.8043673942387144 * \r)
			}; \draw [f] plot [smooth] coordinates {
			(0.4458131325319566 * \r, -0.8341423252737478 * \r)
			(0.3459455325115729 * \r, -0.8912848597210525 * \r)
			(0.23601717764322283 * \r, -0.9416910645479524 * \r)
			(0.11947753291676869 * \r, -0.9795043186589512 * \r)
			(0.0015474583860723146 * \r, -0.9982289374418344 * \r)
			};
			
			\draw [f] plot [smooth] coordinates {
			(-0.9734502512412145 * \r, 0.21645603838080008 * \r)
			(-0.9503650978095842 * \r, 0.2695639155955872 * \r)
			(-0.9319027913015903 * \r, 0.30029198363127285 * \r)
			(-0.9249960830190598 * \r, 0.31030682175729835 * \r)
			(-0.9319027913015903 * \r, 0.30029198363127285 * \r)
			(-0.9503650978095842 * \r, 0.2695639155955872 * \r)
			(-0.9734502512412145 * \r, 0.21645603838080008 * \r)
			}; \draw [f] plot [smooth] coordinates {
			(-0.8933051772026108 * \r, -0.4473276762951144 * \r)
			(-0.9437264382844831 * \r, -0.2838810566103164 * \r)
			(-0.9556361731823627 * \r, -0.12013626344817188 * \r)
			(-0.9420244751112437 * \r, 0.03198907553846231 * \r)
			(-0.9169820338659903 * \r, 0.16571209105209736 * \r)
			(-0.8918592116441181 * \r, 0.27847459619059345 * \r)
			(-0.8736426618596116 * \r, 0.36986103012019333 * \r)
			(-0.8647058074052786 * \r, 0.4396171487037881 * \r)
			(-0.8629573718317479 * \r, 0.48632279099550135 * \r)
			}; \draw [f] plot [smooth] coordinates {
			(-0.24887452699923499 * \r, -0.9670707889480283 * \r)
			(-0.3835049655974334 * \r, -0.903744784202541 * \r)
			(-0.5038502773817606 * \r, -0.7963522732699487 * \r)
			(-0.5995176709543579 * \r, -0.6461535279119586 * \r)
			(-0.6641821670909152 * \r, -0.46177413916498977 * \r)
			(-0.6975919657678042 * \r, -0.25702421281576293 * \r)
			(-0.7049807754233284 * \r, -0.04685628278467713 * \r)
			(-0.6944523655637698 * \r, 0.15612973442767675 * \r)
			(-0.6739836278079241 * \r, 0.34304158902523585 * \r)
			(-0.6492930574882867 * \r, 0.5080066837973016 * \r)
			(-0.6228481288947172 * \r, 0.6465064785996874 * \r)
			(-0.5937379321355996 * \r, 0.7538220434576267 * \r)
			(-0.5581747733444302 * \r, 0.8243675862804476 * \r)
			}; \draw [f] plot [smooth] coordinates {
			(0.1782878076528404 * \r, -0.9625714375805758 * \r)
			(0.09481954969973533 * \r, -0.9388625125517499 * \r)
			(0.01227895333316184 * \r, -0.8828330916905951 * \r)
			(-0.06429365917220373 * \r, -0.7885935419515656 * \r)
			(-0.12958504925707726 * \r, -0.6537569692151357 * \r)
			(-0.17916245213437132 * \r, -0.48092478119651527 * \r)
			(-0.21054453359558659 * \r, -0.2779049380157696 * \r)
			(-0.2236637488749284 * \r, -0.056422853605624324 * \r)
			(-0.22048857606976363 * \r, 0.1700503906008594 * \r)
			(-0.2040374734996862 * \r, 0.38829697279872016 * \r)
			(-0.17728124890467267 * \r, 0.5864632063576696 * \r)
			(-0.14236321065241325 * \r, 0.7543044189429402 * \r)
			(-0.10033008382571201 * \r, 0.88313395007027 * \r)
			(-0.05135858529677241 * \r, 0.9661264233087006 * \r)
			(0.004667165987406435 * \r, 0.9992742435139426 * \r)
			}; \draw [f] plot [smooth] coordinates {
			(0.4458131325319566 * \r, -0.8341423252737478 * \r)
			(0.4089728542436598 * \r, -0.8028295912922291 * \r)
			(0.37442043768152067 * \r, -0.7474499452734952 * \r)
			(0.3441365849139677 * \r, -0.6642080915108389 * \r)
			(0.32012794889533924 * \r, -0.5511193026080723 * \r)
			(0.3041088258523764 * \r, -0.4089404095123532 * \r)
			(0.2971824087693057 * \r, -0.24174672979927664 * \r)
			(0.29963128776713077 * \r, -0.056904273644859726 * \r)
			(0.3108980531376256 * \r, 0.13560887907953117 * \r)
			(0.3297597056854791 * \r, 0.3243442221059205 * \r)
			(0.35461990849812774 * \r, 0.49768891227433143 * \r)
			(0.3838087712166306 * \r, 0.6450997452084247 * \r)
			(0.41580247020092237 * \r, 0.7581667501588631 * \r)
			(0.44932899437946006 * \r, 0.8314804878642604 * \r)
			(0.48337701531684873 * \r, 0.863229791720793 * \r)
			};
		
			\draw [fhat] plot [smooth] coordinates {
			(0.359208706112651 * \r, 0.4987310980385744 * \r)
			(0.15065782257807764 * \r, 0.550502846593558 * \r)
			(-0.07348030935058453 * \r, 0.5800016883314649 * \r)
			(-0.2924121790259052 * \r, 0.581499434809406 * \r)
			(-0.4873372373645832 * \r, 0.5560312839617805 * \r)
			(-0.6458387401127383 * \r, 0.5102688537297579 * \r)
			(-0.763605703329838 * \r, 0.4541353225672979 * \r)
			(-0.8433625986302536 * \r, 0.39817678729087236 * \r)
			(-0.8920066859744301 * \r, 0.351546738514948 * \r)
			(-0.9172814319002328 * \r, 0.3209396860760428 * \r)
			(-0.9249960830190589 * \r, 0.3103068217572983 * \r)
			(-0.9172814319002328 * \r, 0.32093968607604284 * \r)
			(-0.8920066859744302 * \r, 0.35154673851494816 * \r)
			(-0.8433625986302539 * \r, 0.3981767872908726 * \r)
			(-0.7636057033298382 * \r, 0.45413532256729816 * \r)
			(-0.6458387401127386 * \r, 0.5102688537297583 * \r)
			(-0.48733723736458334 * \r, 0.5560312839617809 * \r)
			(-0.2924121790259054 * \r, 0.5814994348094066 * \r)
			(-0.07348030935058458 * \r, 0.5800016883314655 * \r)
			(0.15065782257807758 * \r, 0.5505028465935584 * \r)
			(0.3592087061126516 * \r, 0.4987310980385747 * \r)
			}; \draw [fhat] plot [smooth] coordinates {
			(0.30433808799790063 * \r, 0.05097914699711112 * \r)
			(0.09833448833852315 * \r, 0.05118557294450462 * \r)
			(-0.12323383904366983 * \r, 0.051008144410225165 * \r)
			(-0.3412909118691884 * \r, 0.04914287598663128 * \r)
			(-0.5376121647607792 * \r, 0.047005009927134356 * \r)
			(-0.6991320822908296 * \r, 0.04800447574542066 * \r)
			(-0.8200364065596023 * \r, 0.05655906068574468 * \r)
			(-0.9012467295262934 * \r, 0.07714845687011468 * \r)
			(-0.9480263018793345 * \r, 0.11361983323110303 * \r)
			(-0.9669140795491624 * \r, 0.1687506125153267 * \r)
			(-0.9630359328703623 * \r, 0.24390371452913118 * \r)
			(-0.9383995624193004 * \r, 0.33857233606388665 * \r)
			(-0.8913960981746264 * \r, 0.44972475407241874 * \r)
			(-0.8175305051737833 * \r, 0.5710865996822861 * \r)
			(-0.7112927352050245 * \r, 0.6927577649489853 * \r)
			(-0.5688879153111417 * \r, 0.8017234475424924 * \r)
			(-0.39117977202231896 * \r, 0.8837184672991005 * \r)
			(-0.18579681051416252 * \r, 0.9264305108928617 * \r)
			(0.032749970094639536 * \r, 0.923280138200389 * \r)
			(0.24554136594232268 * \r, 0.8763859908661004 * \r)
			(0.4337361953077159 * \r, 0.7973345902882021 * \r)
			}; \draw [fhat] plot [smooth] coordinates {
			(0.2992996555226626 * \r, -0.3972740228935312 * \r)
			(0.12339351699151814 * \r, -0.4459138558521465 * \r)
			(-0.07627616519410094 * \r, -0.4790441139177672 * \r)
			(-0.2851414118347961 * \r, -0.4945976257041489 * \r)
			(-0.48596254406431894 * \r, -0.4896131062328854 * \r)
			(-0.6629619262870747 * \r, -0.46154780989400634 * \r)
			(-0.8050436227707097 * \r, -0.4089512430501964 * \r)
			(-0.9069995103532917 * \r, -0.33157413862035345 * \r)
			(-0.9686510166990254 * \r, -0.2301630299286167 * \r)
			}; \draw [fhat] plot [smooth] coordinates {
			(0.3539266219188689 * \r, -0.7054436079138677 * \r)
			(0.23012595070210748 * \r, -0.773670663218174 * \r)
			(0.07529361554301595 * \r, -0.8274864202946295 * \r)
			(-0.10073958951370396 * \r, -0.862279385126902 * \r)
			(-0.2829196066426315 * \r, -0.8713790726561192 * \r)
			(-0.45429571503925625 * \r, -0.8491323237112242 * \r)
			(-0.5998205856550505 * \r, -0.792611293161608 * \r)
			}; \draw [fhat] plot [smooth] coordinates {
			(0.4204835469896887 * \r, -0.8363029769797129 * \r)
			(0.35183449847522225 * \r, -0.8908413379740356 * \r)
			(0.25299687047991715 * \r, -0.9402198805367845 * \r)
			(0.13162540253695726 * \r, -0.9790590672250401 * \r)
			(0.0002948148195304201 * \r, -0.9983069001770652 * \r)
			};
			
			\draw [fhat] plot [smooth] coordinates {
			(-0.9744395695783507 * \r, 0.21390513016785856 * \r)
			(-0.9513118530500335 * \r, 0.26814442653564685 * \r)
			(-0.9322413746349252 * \r, 0.2998967218353967 * \r)
			(-0.9249960830190589 * \r, 0.3103068217572983 * \r)
			(-0.9322413746349253 * \r, 0.2998967218353966 * \r)
			(-0.9513118530500337 * \r, 0.26814442653564685 * \r)
			(-0.9744395695783508 * \r, 0.21390513016785845 * \r)
			}; \draw [fhat] plot [smooth] coordinates {
			(-0.8966741044089725 * \r, -0.43970537525259396 * \r)
			(-0.9427325576912367 * \r, -0.28379401820956646 * \r)
			(-0.9534401357782512 * \r, -0.1262717232399519 * \r)
			(-0.9401902570651584 * \r, 0.023407601369679065 * \r)
			(-0.915474306275765 * \r, 0.1588534819022458 * \r)
			(-0.8900428798927428 * \r, 0.2762573983550519 * \r)
			(-0.8710726344209778 * \r, 0.37321342645956024 * \r)
			(-0.8612656109336192 * \r, 0.4474642794529591 * \r)
			(-0.858632395444207 * \r, 0.4960869675252606 * \r)
			}; \draw [fhat] plot [smooth] coordinates {
			(-0.2603178927014831 * \r, -0.9637841965958185 * \r)
			(-0.393041092891077 * \r, -0.8981229892755758 * \r)
			(-0.510161762273099 * \r, -0.7895143707055029 * \r)
			(-0.6016586872864597 * \r, -0.6411651772506053 * \r)
			(-0.6629619262870746 * \r, -0.4615478098940058 * \r)
			(-0.6949109622960667 * \r, -0.2621118513252738 * \r)
			(-0.7024079682433995 * \r, -0.05494749716620467 * \r)
			(-0.6923846254838077 * \r, 0.1488592591986816 * \r)
			(-0.6717581997140545 * \r, 0.33989525976317797 * \r)
			(-0.6458387401127386 * \r, 0.5102688537297583 * \r)
			(-0.6173735318054683 * \r, 0.6530644598540674 * \r)
			(-0.5862325366396838 * \r, 0.7618963198313375 * \r)
			(-0.5496857735778129 * \r, 0.8309581657933198 * \r)
			}; \draw [fhat] plot [smooth] coordinates {
			(0.1944362840297894 * \r, -0.9614998147453855 * \r)
			(0.11707187909006128 * \r, -0.9407657290408861 * \r)
			(0.03162728631781969 * \r, -0.8884414710813127 * \r)
			(-0.051797547527100196 * \r, -0.7963795385526358 * \r)
			(-0.1245897609052493 * \r, -0.6621512647143863 * \r)
			(-0.18062314778854216 * \r, -0.4892004785538161 * \r)
			(-0.21679018499621075 * \r, -0.2858667977349504 * \r)
			(-0.23286514978676087 * \r, -0.06375501326254773 * \r)
			(-0.23079598567876944 * \r, 0.1639987165902979 * \r)
			(-0.2137017353701599 * \r, 0.3841547873650828 * \r)
			(-0.1848808874147076 * \r, 0.5843396379369263 * \r)
			(-0.14706034994244 * \r, 0.7536450982435845 * \r)
			(-0.10200491838901027 * \r, 0.8830790015845673 * \r)
			(-0.05051079413451692 * \r, 0.9660825033362163 * \r)
			(0.007264977048576815 * \r, 0.9992043918719713 * \r)
			}; \draw [fhat] plot [smooth] coordinates {
			(0.42048354698968576 * \r, -0.8363029769797137 * \r)
			(0.39805146609300723 * \r, -0.8011407410251035 * \r)
			(0.36892750112315975 * \r, -0.7442599684467621 * \r)
			(0.3395851359579846 * \r, -0.6590007887203981 * \r)
			(0.315189972647401 * \r, -0.542756215676506 * \r)
			(0.2992996555226616 * \r, -0.3972740228935302 * \r)
			(0.29368491151444687 * \r, -0.2281506254284591 * \r)
			(0.2984288436834292 * \r, -0.04380223654336019 * \r)
			(0.3123019393415583 * \r, 0.14574507890578015 * \r)
			(0.3333014048937661 * \r, 0.3299688293412569 * \r)
			(0.3592087061126516 * \r, 0.4987310980385747 * \r)
			(0.3880428191440367 * \r, 0.6429377154462164 * \r)
			(0.4183367203796562 * \r, 0.7550766739166581 * \r)
			(0.4492150856850901 * \r, 0.8297438894066522 * \r)
			(0.48029168834319413 * \r, 0.8641956473724512 * \r)
			};
			
		    \draw (O) circle (\r);
		\end{tikzpicture}
		\caption{$25$ samples.}
	\end{subfigure}
	\caption{Using different number of samples, we approximate $f \from [-1, 1]^{2} \to S^{2}$.
	Let $\Gamma$ be a $10 \times 10$ grid on $[-1, 1]^{2}$.
	Solid lines show $f(\Gamma)$ and dashed lines show $\mathhat{f}(\Gamma)$.}
	\label{fig:map_into_sphere}
\end{figure}

\subsection{Recent work}%
\label{sub:Recent work}

\new{%
Lots of research is being done on approximating maps into manifolds.
Dyn and Sharon~\cite{Dyn17a,Dyn17b} do manifold curve fitting by adapting subdivision schemes.
Hardering and Wirth~\cite{Hardering21} and Heeren, Rumpf, and Wirth~\cite{Heeren19} and Zhang and Noakes~\cite{Zhang19} consider manifold versions of cubic splines.
Bergmann and Gousenbourger~\cite{Bergmann18}, and Gousenbourger, Massart, and Absil~\cite{Gousenbourger19} define manifold B\'ezier curves.
Sharon, Cohen, and Wendland~\cite{Sharon23}, Petersen and M\"uller~\cite{Petersen19}, and Grohs, Sprecher, and Yu~\cite{Grohs16} generalize moving least squares to manifolds.
Cornea, Zhu, Kim, and Ibrahim \cite{Cornea16}, Hinkle, Fletcher, and Joshi \cite{Hinkle14}, and Fletcher \cite{Fletcher13} do regression of data on manifolds.
}

\new{%
Also similar problems receive attention.
The \emph{Hermite interpolation problem} is to find a map that has given values and given derivatives in a set of sample points.
Such a map can also be used for approximation.
Séguin and Kressner~\cite{Seguin24}, Vardi, Dyn, and Sharon~\cite{Vardi24}, Zimmermann~\cite{Zimmermann20,Zimmermann22b}, Zimmermann and Bergmann~\cite{Zimmermann22}, and Moosm\"uller~\cite{Moosmuller16,Moosmuller17}, solve Hermite problems for maps into manifolds.
}

\new{%
There are also efforts to adapt statistics tools to manifolds.
We only mention a few here.
Gebhardt, Schubert, and Steinbach~\cite{Gebhardt23}, Curry, Marsland, and McLachlan~\cite{Curry19}, Lazar and Lin~\cite{Lazar17}, and Chakraborty, Seo, and Vemuri~\cite{Chakraborty16} study how principal component analysis can be extended to different manifolds, a tool known as \emph{principal geodesic analysis}.
Pennec \cite{Pennec20a} discusses these techniques specifically in the context of \emph{diffusion tensor imaging}.
Diepeveen, Chew, and Needell~\cite{Diepeveen23} develop curvature corrected versions of tensor decompositions to compress manifold data.
Diepeveen~\cite{Diepeveen24} extends several other common data analysis tools to manifolds.
}

\subsection{Outline}

In \cref{sec:Preliminaries}, we recall some geometric preliminaries including Toponogov's theorem, a well-known Riemannian comparison theorem.

\Cref{sec:Error_bounds} describes the error analysis of our algorithm template, where our main original contribution is \cref{thm:manifold_error_bound}.
Our analysis assumes a lower bound on the \emph{sectional curvature} of the manifold. 
For completeness and ease of reference, we lists several concrete manifolds along with explicit bounds on their curvature in \cref{sec:Manifolds with lower bounded sectional curvature}.
We are especially interested in manifolds with nonnegative curvature, as their error bound is guaranteed not to be worse than in the familiar linear case. 

Inspired by Dolgov, Kressner, and Str\"ossner~\cite{Strossner2022} and by Str\"ossner, Sun, and Kressner~\cite{Strossner2023}, we use \emph{tensorized Chebyshev interpolation} in step \ref{item:step2} of our template to implement \cref{alg:manifold_approximation_scheme}.
We discuss the rest of the implementation details and recall error bounds for tensorized Chebyshev interpolation in \cref{sec:Algorithm}.

We present two example approximation problems inspired by linear algebra applications in \cref{sec:Numerical experiments}.
In these examples, the approximation error is verified to be bounded by \cref{thm:manifold_error_bound}.
Our implementation is available as a Julia package, \texttt{ManiFactor.jl}~\cite{manifactor}. 

Finally, the paper is concluded with a summary of the main results and a future outlook in \cref{sec:conclusions}.

\section*{Acknowledgements}
We thank Arne Bouillon for spotting an error in \cref{eq:manifold_error_bound2_simplified} in an earlier draft, and Astrid Herremans for her expertise in approximation theory and her suggestions about the figures.
We thank Ronny Bergmann for reading the preprint and giving valuable comments.
\new{We also thank three anonymous reviewers for valuable suggestions for improvements.}

\section*{Funding}

This research was funded by BOF project C16/21/002 by the Internal Funds of KU Leuven and FWO project G080822N.\
J. Van der Veken is additionally supported by the Research Foundation--Flanders (FWO) and the Fonds de la Recherche Scientifique (FNRS) under EOS Project G0I2222N.\ 
R. Vandebril is additionally supported by the Research Foundation--Flanders (Belgium), projects G0A9923N and G0B0123N.

\section{Geometric preliminaries}%
\label{sec:Preliminaries}

Morally, a \emph{manifold} is a space $M$ that locally can be identified with open subsets of $\reals^{n}$.
Such an identification is called a \emph{chart}.
A detailed introduction to manifolds can be found in Lee~\cite[chapters 1--3]{Lee13}.

\new{%
The \emph{tangent space} $T_p M$ at a point $p \in M$ is the space of velocity vectors of curves in $M$ through $p$.
It is an $n$-dimensional vector space.
A \emph{Riemannian manifold} is a manifold whose tangent space is equipped with an inner product $\innerproduct{\cdot}{\cdot}_p$.
Such an inner product induces a norm $\norm{\cdot}_{p}$ on $T_p M$ and a notion of distance $d_M(\cdot, \cdot)$ between points on $M$.}
A \emph{geodesic} is a curve in $M$ that is locally distance-minimizing.
Each tangent vector is associated with a geodesic through $p$, and vice versa.
This gives a canonical chart around each point $p \in M$, called a \emph{normal coordinate chart}.
It consists of the \emph{manifold exponential} and its inverse, the \emph{manifold logarithm}, which 
identify a neighbourhood $S \subset T_p M$ of $0$ with a neighbourhood $S' \subset M$ of $p$.
We denote them by
	\begin{align}
		\operatorname{exp}_p \from S \to S',
\qquad\text{and}\qquad
		\operatorname{log}_p \from S' \to S.
	\end{align}%
A detailed introduction to Riemannian geometry is given in Lee~\cite[chapters 1--8]{Lee18}.

A Riemannian manifold $M$ is \emph{geodesically convex} if for any two points on $M$, there is a unique distance-minimizing geodesic connecting them.
Such a geodesic is called \emph{minimal}.
A \emph{geodesic triangle} is a set of three minimal geodesics connecting three points on $M$.

\subsection{Sectional curvature}%
\label{sub:Sectional curvature}

If $M$ is a two-dimensional Riemannian manifold, then we can define the \emph{Gaussian curvature} $K$ at a point $p \in M$ in several ways.
One way is to define it by the angular defect of small triangles:
\begin{align}
	\alpha + \beta + \gamma - \pi = K A + \textrm{higher order terms}, 
\end{align}
where $\alpha$, $\beta$, $\gamma$ are the angles of a geodesic triangle in $M$ at $p$ with area $A$.

If $M$ is not two-dimensional, consider a two-dimensional linear subspace $\Pi$ of $T_p M$.
Then $\Sigma = \operatorname{exp}_p(\Pi)$ is a two-dimensional submanifold of $M$.
The Gaussian curvature of $\Sigma$ at $p$ is called the \emph{sectional curvature} of $M$ along $\Pi$.

A manifold with constant sectional curvature $H$ is called a \emph{model manifold}.
It is uniquely determined by $H$ and its dimension.

\subsection{Riemannian comparison theory}%
\label{sub:Riemannian_comparison_theory}

Riemannian comparison theory studies inequalities involving geometric quantities.
A good reference is Cheeger and Ebin~\cite{Cheeger08}. Many comparison theorems relate properties of geodesics to curvature bounds.
\emph{Toponogov's theorem} is such a theorem, bounding side lengths of geodesic triangles defined on manifolds with lower bounded sectional curvature.
It is illustrated in \cref{fig:geodesic_triangle}.

\begin{figure}[t]
	\centering
	\begin{tikzpicture}
	    \node (O) at (0, 0) {\huge{$\leadsto$}}; 
	    \node (O1) at (-\r, 0) {}; 
	    \node (O2) at (1.33 * \r, 0) {}; 
		
		\pgfmathsetseed{38}

		\shade[rounded corners={0.2 * \r}, myspherestyle] (O1) \irregularcircle{0.9 * \r}{0.25 * \r};
		\pgfmathsetseed{38}
		\draw[rounded corners={0.2 * \r}] (O1) \irregularcircle{0.9 * \r}{0.25 * \r};
	
	    \node (M) at (-0.85 * \r, 0.57 * \r) {$M$};
	    \node (A) at (-\r, -0.67 * \r) {};
	    \node (B) at (-1.33 * \r, 0.2 * \r) {}; 
		\draw (B) node[anchor=south] {};
	    \node (C) at (-0.687 * \r, 0.067 * \r) {}; 
		\draw (C) node[anchor=south] {};
		\node (AB_control1) at (-1.2 * \r, -0.4 * \r) {};
		\node (AB_control2) at (-1.26 * \r, 0) {};
		\node (AC_control) at (-0.74 * \r, -0.23 * \r) {};
		\node (BC_control) at (-0.92 * \r, 0.18 * \r) {};
	
	    \draw plot[smooth, tension=0.8] coordinates { (A) (AB_control1) (AB_control2) (B) };
	    \draw plot[smooth, tension=0.8] coordinates { (A) (AC_control) (C) };
	    \draw[dashed] plot[smooth, tension=0.8] coordinates { (B) (BC_control) (C) };
		
		\node at ($(AB_control1)!0.35!(AB_control2)$) {\footnotesize\rotatebox[origin=c]{-80}{$|$}};
		\node at ($(A)!0.85!(AC_control)$) {\footnotesize\rotatebox[origin=c]{63}{$||$}};
	
		\begin{scope}
			\node (L) at ($(O1) + (0.5 * \r, -0.5 * \r)$) {};
			\node (R) at ($(O1) + (-0.5 * \r, -0.5 * \r)$) {};
	
			\clip plot[smooth, tension=0.8] coordinates { (A) (AB_control1) (AB_control2) (B) (L) };
	    	\clip plot[smooth, tension=0.8] coordinates { (A) (AC_control) (C) (R) };
	    	\draw (A) circle (0.2 * \r);
			\node () at ($(A) + (0, 0.2 * \r)$) {\footnotesize$|||$};
		\end{scope}

	    \shade [myspherestyle] (O2) circle (\r);
	    \draw (O2) circle (\r);
	
	    \node (N) at (1.55 * \r, 0.7 * \r) {$N$};
	    \node (A') at (1.33 * \r, -0.67 * \r) {};
	    \node (B') at (0.88 * \r, 0.2 * \r) {}; 
		\draw (B') node[anchor=south] {};
	    \node (C') at (1.82 * \r, 0.067 * \r) {}; 
		\draw (C') node[anchor=south] {};
		\node (A'B'_control) at (1.074 * \r, -0.273 * \r) {};
		\node (A'C'_control) at (1.605 * \r, -0.333 * \r) {};
		\node (B'C'_control) at (1.35 * \r, 0.152 * \r) {};
	
	    \draw plot[smooth, tension=1.0] coordinates { (A') (A'B'_control) (B') };
	    \draw plot[smooth, tension=1.0] coordinates { (A') (A'C'_control) (C') };
	    \draw[dashed] plot[smooth, tension=1.0] coordinates { (B') (B'C'_control) (C') };
	
		\node at (A'B'_control) {\footnotesize\rotatebox[origin=c]{-60}{$|$}};
		\node at (A'C'_control) {\footnotesize\rotatebox[origin=c]{57}{$||$}};
	
		\begin{scope}
			\node (L') at ($(O2) + (0.5 * \r, -0.5 * \r)$) {};
			\node (R') at ($(O2) + (-0.5 * \r, -0.5 * \r)$) {};
	
			\clip plot[smooth, tension=1.0] coordinates { (A') (A'B'_control) (B') (L') };
	    	\clip plot[smooth, tension=1.0] coordinates { (A') (A'C'_control) (C') (R') };
	    	\draw (A') circle (0.2 * \r);
			\node () at ($(A') + (0, 0.2 * \r)$) {\footnotesize$|||$};
		\end{scope}
	
	\end{tikzpicture}
	\caption{%
 		\Cref{prop:toponogov} gives conditions for which the length of the opposite (dashed) side in $M$ is bounded by the length of the opposite (dashed) side in $N$.
		Equal length geodesics and equal angles have the same number of notches.%
		}
	\label{fig:geodesic_triangle}
\end{figure}
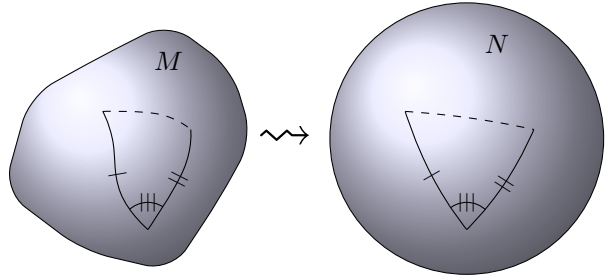

\begin{proposition}[Toponogov's theorem~\protect{\cite[Theorem 2.2]{Cheeger08}}]\label{prop:toponogov}
	Let $M$ be a geodesically convex Riemannian manifold with sectional curvature bounded from below by some constant $H$.
	Let $\gamma_1$, $\gamma_2 \from [0, 1] \to M$ be minimal geodesics such that $\gamma_1(0) = \gamma_2(0)$.
	If $H > 0$, also assume that $d_M(\gamma_1(0), \gamma_1(1))$, $d_M(\gamma_2(0), \gamma_2(1)) \leq \frac{\pi}{\sqrt{H}}$.
	Consider a model manifold $N$ of constant sectional curvature $H$ and let $\lambda_1$, $\lambda_2 \from [0, 1] \to N$ be geodesics such that $\lambda_1(0) = \lambda_2(0)$ and the angle where they meet satisfies $\angle(\lambda_1, \lambda_2) = \angle(\gamma_1, \gamma_2)$.
	Then
	\begin{align}
		d_{M}(\gamma_1(1), \gamma_2(1)) \leq d_{N}(\lambda_1(1), \lambda_2(1)).
	\end{align}
\end{proposition}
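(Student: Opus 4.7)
The plan is to derive the inequality from Rauch's first comparison theorem---which asserts that Jacobi fields grow faster under smaller sectional curvature---and then to upgrade this infinitesimal statement into a global comparison by subdivision and an ODE argument.

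For the local case, I would first restrict attention to hinges with short legs, where the minimizing geodesic $\sigma_s$ from $\gamma_1(1)$ to $\gamma_2(s)$ exists uniquely and varies smoothly with $s$. Let $\ell(s) := d_M(\gamma_1(1), \gamma_2(s))$ and $\tilde{\ell}(s) := d_N(\lambda_1(1), \lambda_2(s))$. By the first variation of arc length, $\ell'(s)$ equals the cosine of the angle at which $\sigma_s$ meets $\gamma_2$ at $\gamma_2(s)$, and an analogous formula holds for $\tilde{\ell}'(s)$ in $N$. Differentiating once more expresses $\ell''$ via a Jacobi field along $\sigma_s$, and the Hessian comparison theorem---a direct consequence of Rauch applied to the lower bound $H$---yields a differential inequality for $\ell$ whose equality counterpart is satisfied by $\tilde{\ell}$. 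The hinge conditions $\gamma_1(0) = \gamma_2(0)$ and $\angle(\gamma_1,\gamma_2) = \angle(\lambda_1,\lambda_2)$, together with matching leg lengths, ensure $\ell(0) = \tilde{\ell}(0)$ and $\ell'(0) = \tilde{\ell}'(0)$, so a standard Sturm-type ODE comparison delivers $\ell(s) \leq \tilde{\ell}(s)$ on any interval where both are smooth.

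To extend to arbitrary legs, I would subdivide $\gamma_2([0,1])$ at points $0 = s_0 < s_1 < \cdots < s_k = 1$, replace the original hinge by the shorter subhinge $(\sigma_{s_i}, \gamma_2|_{[s_i, s_{i+1}]})$ at each step, and iterate the local estimate. The critical bookkeeping is the angle at each subdivision vertex: by the model-space law of cosines, the corresponding angle at $\lambda_2(s_i)$ in a triangle in $N$ built from the lengths produced by the previous step is no smaller than the one in $M$, which is exactly the compatibility needed to feed into the next hinge. A standard limit as the partition refines then yields the bound for arbitrary leg lengths. The hypothesis $d_M(\gamma_i(0),\gamma_i(1)) \leq \pi/\sqrt{H}$ in the positive curvature case is used to keep all the comparison triangles within the diameter of $N$ and away from conjugate points, so that Rauch applies at every subdivision stage.

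The main obstacle would be controlling the cut locus of $\gamma_1(1)$ as $s$ varies: $\ell$ may fail to be smooth where minimizers from $\gamma_1(1)$ to $\gamma_2(s)$ are not unique, which is precisely where the naive ODE argument breaks down. I would address this by working on the open intervals of $s \in [0,1]$ on which $\sigma_s$ is smooth, invoking the geodesic convexity assumption on $M$ to ensure minimizers exist at all relevant pairs of points, and then verifying that $\ell \leq \tilde{\ell}$ extends continuously across each cut locus crossing---either by noting that $\ell$ has upward-jumping one-sided derivatives at such points (so the inequality can only improve), or by a short approximation argument perturbing the hinge into general position.
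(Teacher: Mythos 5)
The paper does not prove this proposition at all: it is quoted verbatim as a known result from Cheeger and Ebin, so there is no in-paper argument to compare against. Your sketch follows the classical route of that reference --- Rauch's comparison theorem gives the hinge estimate for short legs, and a subdivision argument globalizes it --- so in spirit you are reconstructing the standard proof rather than inventing a new one. The local step is essentially sound: first variation identifies $\ell'$ with the cosine of the meeting angle, the Hessian comparison under a lower sectional curvature bound gives the correct one-sided differential inequality, and Sturm comparison closes the local estimate.

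The gap is in the globalization. At each subdivision vertex $\gamma_2(s_i)$ you need to know that the angle of the new hinge in $M$ is no larger than the corresponding angle of the comparison configuration in $N$ built from the lengths already obtained. But that angle comparison is precisely the \emph{angle version} of Toponogov's theorem, which is equivalent to the distance version you are trying to prove --- so as written the induction is circular. The classical resolution (and the reason the proof in Cheeger--Ebin runs to several pages) is a simultaneous induction on both the hinge statement and the angle statement, starting from triangles small enough that Rauch applies directly and carefully propagating both inequalities together; the two statements bootstrap each other rather than one feeding the other. Your cut-locus discussion and the role of the $\pi/\sqrt{H}$ hypothesis are correctly identified, but without restructuring the induction as a joint one over hinges and angles, the subdivision step as described does not close.
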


The advantage of looking at geodesic triangles on the model manifold is that there are spherical and hyperbolic versions of the trigonometric identities.
These identities are used in \cref{sec:Error_bounds} to find explicit error bounds when approximating maps into manifolds.

\begin{lemma}[Reid \protect{\cite[sections 3.2 and 3.10]{Reid05}}]\label{lemma:law_of_cosines}
	Let $N$ be a manifold of constant sectional curvature $H$ and consider a geodesic triangle on $N$ with side lengths $A$, $B$, $C$ and opposite angles $a$, $b$, $c$ respectively.
	Then
	\begin{align}
		\cos( C \sqrt{H} ) ={}& \cos( A \sqrt{H} ) \cos( B \sqrt{H} ) + \sin( A \sqrt{H} ) \sin( B \sqrt{H} ) \cos c \quad & \text{if } H > 0,\\
		\cosh( C \sqrt{\abs{H}} ) ={}& \cosh( A \sqrt{\abs{H}} ) \cosh( B \sqrt{\abs{H}} ) - \sinh( A \sqrt{\abs{H}} ) \sinh( B \sqrt{\abs{H}} ) \cos c \quad & \text{if } H < 0.
	\end{align}
\end{lemma}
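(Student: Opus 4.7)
The plan is to reduce to unit curvature and then verify each identity by a direct computation in the standard model manifold. Rescaling the Riemannian metric by a factor $\lambda > 0$ multiplies sectional curvature by $1/\lambda$ and geodesic distances by $\sqrt{\lambda}$, while leaving angles unchanged; with $\lambda = |H|$ this absorbs the factors $\sqrt{H}$ and $\sqrt{|H|}$ in the statement and reduces the two cases to proving $\cos C = \cos A \cos B + \sin A \sin B \cos c$ on the round sphere $S^n$ and $\cosh C = \cosh A \cosh B - \sinh A \sinh B \cos c$ on hyperbolic space $\mathbb{H}^n$. Since any geodesic triangle in $N$ lies in a neighbourhood that is locally isometric to the corresponding model manifold, it is enough to verify the identity there.

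For the spherical case, I would represent the vertices as unit vectors $P_1, P_2, P_3 \in \mathbb{R}^{n+1}$ on $S^n$, with side $A$ joining $P_2$ and $P_3$, side $B$ joining $P_1$ and $P_3$, and side $C$ joining $P_1$ and $P_2$, so that the angle $c$ sits at $P_3$. Because arc length on the unit sphere equals the angle subtended at the origin, $\langle P_2, P_3\rangle = \cos A$, $\langle P_1, P_3\rangle = \cos B$, and $\langle P_1, P_2\rangle = \cos C$. Projecting $P_2$ and $P_1$ orthogonally onto the tangent space $T_{P_3}S^n = P_3^{\perp}$ and normalising produces the unit tangent vectors
\[
u_A = \frac{P_2 - \cos(A)\, P_3}{\sin A}, \qquad u_B = \frac{P_1 - \cos(B)\, P_3}{\sin B},
\]
pointing along the two sides incident to $P_3$. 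Expanding $\cos c = \langle u_A, u_B\rangle$ and substituting the three inner product identities above collapses, after straightforward cancellation, to the spherical law of cosines.

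For the hyperbolic case I would run the analogous computation in the hyperboloid model $\mathbb{H}^n = \{x \in \mathbb{R}^{n,1} : \langle x, x\rangle_\eta = -1,\ x_0 > 0\}$ equipped with the Minkowski form $\langle x, y\rangle_\eta = -x_0 y_0 + x_1 y_1 + \cdots + x_n y_n$, whose restriction to tangent spaces is positive definite and realises the Riemannian metric of curvature $-1$. The key replacements are $\langle P_i, P_j\rangle_\eta = -\cosh d(P_i, P_j)$ and $\langle P_3, P_3\rangle_\eta = -1$, the latter introducing a sign flip in the orthogonal projection. The unit tangent vectors at $P_3$ along the two incident sides come out to $u_A = (P_2 - \cosh(A)\, P_3)/\sinh A$ and $u_B = (P_1 - \cosh(B)\, P_3)/\sinh B$, and computing $\cos c = \langle u_A, u_B\rangle_\eta$ with the three Minkowski inner product values then produces the hyperbolic law of cosines.

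I do not expect a genuine obstacle in either case: each reduces to a few bilinear expansions that collapse cleanly. The one place where care is warranted is in the hyperbolic derivation, where the minus signs coming from $\langle P_i, P_i\rangle_\eta = -1$ have to be tracked through the projection and normalisation; but they combine consistently once one keeps in mind that the induced tangent-space pairing is positive definite, so that the expression $\cos c = \langle u_A, u_B\rangle_\eta$ retains its usual Riemannian meaning.
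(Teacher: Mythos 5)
Your proposal is correct, and since the paper does not prove this lemma at all --- it is cited verbatim from Reid's lecture notes --- you are supplying the standard derivation that the cited source itself uses: rescale the metric by $\abs{H}$ to normalise to curvature $\pm 1$, then compute in the round sphere $S^n \subset \reals^{n+1}$ and the hyperboloid model in $\reals^{n,1}$. Both bilinear expansions check out: with $u_A = (P_2 - \cos(A)P_3)/\sin A$ and $u_B = (P_1 - \cos(B)P_3)/\sin B$ one gets $\langle u_A, u_B\rangle = (\cos C - \cos A \cos B)/(\sin A \sin B)$, and the Minkowski analogue with $\langle P_3, P_3\rangle_\eta = -1$ gives $(\cosh A \cosh B - \cosh C)/(\sinh A \sinh B)$, which are exactly the two claimed identities. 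The only wording worth tightening is the reduction step: a large geodesic triangle need not fit inside a single locally isometric neighbourhood, but this is harmless here because the paper's ``model manifold'' is by definition the simply connected space form, which is \emph{globally} isometric to the sphere or hyperbolic space after rescaling.
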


\section{Error analysis}%
\label{sec:Error_bounds}

The distance between two points on a Riemannian manifold depends on the geometry of that manifold.
For example, the distance between two orthogonal matrices $A$ and $B$ in the space of orthogonal matrices will always be greater than or equal to the distance between $A$ and $B$ in the ambient space of matrices.
When approximating maps into Riemannian manifolds it is natural to measure the approximation error \emph{intrinsically} on the manifold, rather than in some \emph{extrinsic} ambient space in which the manifold could be embedded.
Moreover, for embedded manifolds with large codimension, such as low-rank matrices or tensors, it can be inefficient or even infeasible to measure the error in the ambient space.
The next result thus bounds the intrinsic error of the approximation template from \cref{sub:Contribution}.
 
\begin{theorem}\label{thm:manifold_error_bound}
	Let $M$ be a Riemannian manifold with sectional curvature bounded from below by some constant $H$ and let $f \from R \to M$, where $R$ is a set whose image fits in a single geodesically convex normal coordinate chart $S \subset T_p M$ around $p \in M$.
	Assume that the elements in $S$ have norm bounded by some constant $\sigma$.
	Let
	\begin{align}
	 	g = {\operatorname{log}_p} \circ f \from R \to S \quad\text{and}\quad \mathhat{f} = {\operatorname{exp}_p} \circ \mathhat{g} \from R \to M,
	\end{align}
	where $\mathhat{g} \from R \to S$ is an approximation of $g$ such that
	\begin{align}
		\norm{g(x) - \mathhat{g}(x)}_p \leq \epsilon
	\end{align}
	for all $x \in R$.
	Then, the distance between $f(x)$ and $\mathhat{f}(x)$ on $M$ obeys
	\begin{align}
		d_M(f(x), \mathhat{f}(x)) \leq{}& \epsilon\quad & \text{if } H \geq 0,%
		\label{eq:manifold_error_bound1}\\
		d_M(f(x), \mathhat{f}(x)) \leq{}& \epsilon + \frac{2}{\sqrt{\abs{H}}} \operatorname{arcsinh} \mleft( \frac{\epsilon \sinh(\sigma \sqrt{\abs{H}})}{2 \sigma} \mright)\quad & \text{if } H < 0,%
		\label{eq:manifold_error_bound2}
	\end{align}
	for all $x \in R$.
\end{theorem}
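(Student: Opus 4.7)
The plan is to form the geodesic triangle in $M$ with vertices $p$, $f(x)$, and $\mathhat{f}(x)$ and to bound the side opposite $p$ using Toponogov's theorem (\cref{prop:toponogov}) followed by the law of cosines (\cref{lemma:law_of_cosines}) in an appropriately chosen model space. The two sides meeting at $p$ are the minimal geodesics $t \mapsto \operatorname{exp}_p(t g(x))$ and $t \mapsto \operatorname{exp}_p(t \mathhat{g}(x))$; their lengths are $A := \|g(x)\|_p$ and $B := \|\mathhat{g}(x)\|_p$, both at most $\sigma$, and they meet with angle $c$ equal to the angle between $g(x)$ and $\mathhat{g}(x)$ in $T_p M$. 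Minimality of these geodesics and existence of the minimal geodesic from $f(x)$ to $\mathhat{f}(x)$ are guaranteed because the entire triangle lies inside the geodesically convex chart $\operatorname{exp}_p(S)$.

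For $H \geq 0$, I would apply \cref{prop:toponogov} with the Euclidean model, which is valid since the sectional curvature is at least $H \geq 0$. The side of the Euclidean comparison triangle opposite $p$ has length $\sqrt{A^2 + B^2 - 2 A B \cos c} = \|g(x) - \mathhat{g}(x)\|_p \leq \epsilon$, yielding \cref{eq:manifold_error_bound1}.

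For $H < 0$, I would apply \cref{prop:toponogov} with the hyperbolic model $N$ of constant curvature $H$, obtaining some comparison distance $C \geq d_M(f(x), \mathhat{f}(x))$, and bound $C$ by a triangle inequality with a carefully chosen auxiliary point. Assume without loss of generality $A \leq B$, and let $q$ be the point in $N$ on the geodesic from the image of $p$ toward the image of $\mathhat{f}(x)$ at distance $A$ from $p$. Then $C \leq D + (B - A)$, where $D$ is the distance in $N$ from the image of $f(x)$ to $q$, and $B - A \leq \epsilon$ by the reverse triangle inequality $|\,\|g(x)\|_p - \|\mathhat{g}(x)\|_p\,| \leq \|g(x) - \mathhat{g}(x)\|_p$. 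The sub-triangle producing $D$ is isoceles with two sides of length $A$ and apex angle $c$; \cref{lemma:law_of_cosines} together with the identity $\cosh(2\theta) - 1 = 2 \sinh^2 \theta$ then yields $\sinh\!\mleft( D \sqrt{|H|} / 2 \mright) = \sinh(A \sqrt{|H|}) \sin(c/2)$.

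Finally, I would bound this right-hand side by $\tfrac{\epsilon \sinh(\sigma \sqrt{|H|})}{2 \sigma}$. Rewriting the Euclidean identity as $\|g(x) - \mathhat{g}(x)\|_p^2 = (A - B)^2 + 4 A B \sin^2(c/2)$ gives $2 \sqrt{A B} \sin(c/2) \leq \epsilon$, and the assumption $A \leq B$ implies $2 A \sin(c/2) \leq \epsilon$, so $\sinh(A \sqrt{|H|}) \sin(c/2) \leq \tfrac{\sinh(A \sqrt{|H|})}{2 A} \epsilon$. Since $x \mapsto \sinh(x)/x$ is increasing on $(0, \infty)$ (because $\tanh x \leq x$), this is bounded by $\tfrac{\epsilon \sinh(\sigma \sqrt{|H|})}{2 \sigma}$. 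Applying the monotone $\operatorname{arcsinh}$ and adding the $(B - A) \leq \epsilon$ slack produces \cref{eq:manifold_error_bound2}. I expect the main subtlety to be identifying the auxiliary split so that the radial discrepancy cleanly absorbs into an additive $\epsilon$ while the transversal isoceles contribution is governed by the scale-invariant quantity $\sinh(A \sqrt{|H|}) / A$, which can then be replaced by its value at $\sigma$ by monotonicity.
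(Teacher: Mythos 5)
Your proposal is correct, and it shares the paper's skeleton (Toponogov applied to the triangle $(p, f(x), \mathhat{f}(x))$, then constant-curvature trigonometry, with the $H \geq 0$ case handled identically), but the $H < 0$ estimate is carried out by a genuinely different, more geometric route. The paper works entirely algebraically with \cref{lemma:law_of_cosines}: it bounds $\cos(\angle(g,\mathhat{g}))$ from below as in \cref{eq:bound_for_cos}, applies the reverse triangle inequality inside $\cosh$ in \cref{eq:where_we_used_reverse_triangle_inequality}, converts to $\sinh$ via the double-angle identity, and finally invokes subadditivity of $\sqrt{\cdot}$ and $\operatorname{arcsinh}$ in \cref{eq:where_we_used_subadditivity} to split the bound into the two additive terms. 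You instead split the comparison triangle itself, via an auxiliary point $q$ at distance $A = \min(\norm{g}_p, \norm{\mathhat{g}}_p)$ along the longer radial side, into a radial segment of length $B - A \leq \epsilon$ and an isoceles triangle whose opposite side satisfies the exact identity $\sinh(D\sqrt{\abs{H}}/2) = \sinh(A\sqrt{\abs{H}})\sin(c/2)$; the Euclidean decomposition $\norm{g-\mathhat{g}}_p^2 = (A-B)^2 + 4AB\sin^2(c/2)$ and the monotonicity of $\sinh(x)/x$ then give the same final bound. Your bound $2A\sin(c/2) \leq \epsilon$ is the same information as \cref{eq:bound_for_cos} in disguise, and your triangle inequality $C \leq D + (B-A)$ plays the role of the paper's subadditivity step. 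What your version buys is transparency about where the two terms in \cref{eq:manifold_error_bound2} come from (radial mismatch versus angular mismatch), and it makes the sharpness statement of \cref{prop:lower_bound_for_error_bound} immediate: when $A = B = \sigma$ the radial term vanishes and the isoceles identity holds with equality. The only points worth tightening in a written version are the degenerate case $A = 0$ (where the angle $c$ is undefined but the bound is trivial) and an explicit remark that $q$ lies on a minimal geodesic in the model space so that $d_N(q, \mathhat{h}(x)) = B - A$ exactly; neither affects correctness.
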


\begin{remark}
We can in turn upper bound \cref{eq:manifold_error_bound2} by
\begin{align}
	d_M(f(x), \mathhat{f}(x)) \leq{}& \epsilon + \frac{2}{\sqrt{\abs{H}}} \log \mleft(
		\frac{\epsilon \Exp(\sigma \sqrt{\abs{H}})}{2 \sigma} + 1
		\mright).
 	\label{eq:manifold_error_bound2_simplified}
\end{align}
\end{remark}

The assumptions in \cref{thm:manifold_error_bound} about geodesic convexity, bounded curvature, and bounded normal coordinate chart may seem very specific, but we note that they are always satisfied in some neighbourhood of $p$ \new{since curvature is continuous}.
To formulate explicit bounds, we however need explicit expressions for $H$.
Therefore, we list several standard manifolds along with explicit lower bounds for their sectional curvature in \cref{sec:Manifolds with lower bounded sectional curvature}.

\new{%
Our ability to approximate $g$ typically depends on both its smoothness and the specific features of $R$.
As an example, a scheme for approximating real-analytic functions from $[-1, 1]^{m}$ is presented in \cref{sub:Tensorized Chebyshev interpolation}.
}

\begin{proof}[Proof of \cref{thm:manifold_error_bound}]
The proof can be summarized as applying Toponogov's theorem to the geodesic triangle $(f(x), \mathhat{f}(x), p)$, as visualized in \cref{fig:geodesic_triangle2}.

\begin{figure}[t]
	\centering
	\begin{tikzpicture}
	    \node (O) at (0, 0) {\huge{$\leadsto$}}; 
	    \node (O1) at (-\r, 0) {}; 
	    \node (O2) at (1.33 * \r, 0) {}; 
		
		\pgfmathsetseed{38}
		\shade[rounded corners={0.2 * \r}, myspherestyle] (O1) \irregularcircle{0.9 * \r}{0.25 * \r};
		\pgfmathsetseed{38}
		\draw[rounded corners={0.2 * \r}] (O1) \irregularcircle{0.9 * \r}{0.25 * \r};
	
	    \node (M) at (-0.85 * \r, 0.57 * \r) {$M$};
	    \node (A) at (-\r, -0.67 * \r) {};
		\draw (A) node[anchor=north] {$p$};
	    \node (B) at (-1.33 * \r, 0.2 * \r) {}; 
		\draw (B) node[anchor=south] {$f(x)$};
	    \node (C) at (-0.687 * \r, 0.067 * \r) {}; 
		\draw (C) node[anchor=south] {~~~$\mathhat{f}(x)$};
		\node (AB_control1) at (-1.2 * \r, -0.4 * \r) {};
		\node (AB_control2) at (-1.26 * \r, 0) {};
		\node (AC_control) at (-0.74 * \r, -0.23 * \r) {};
		\node (BC_control) at (-0.92 * \r, 0.18 * \r) {};
	
	    \draw plot[smooth, tension=0.8] coordinates { (A) (AB_control1) (AB_control2) (B) };
	    \draw plot[smooth, tension=0.8] coordinates { (A) (AC_control) (C) };
	    \draw[dashed] plot[smooth, tension=0.8] coordinates { (B) (BC_control) (C) };
		
		\node at ($(AB_control1)!0.35!(AB_control2)$) {\footnotesize\rotatebox[origin=c]{-80}{$|$}};
		\node at ($(A)!0.85!(AC_control)$) {\footnotesize\rotatebox[origin=c]{63}{$||$}};
	
		\begin{scope}
			\node (L) at ($(O1) + (0.5 * \r, -0.5 * \r)$) {};
			\node (R) at ($(O1) + (-0.5 * \r, -0.5 * \r)$) {};
	
			\clip plot[smooth, tension=0.8] coordinates { (A) (AB_control1) (AB_control2) (B) (L) };
	    	\clip plot[smooth, tension=0.8] coordinates { (A) (AC_control) (C) (R) };
	    	\draw (A) circle (0.2 * \r);
			\node () at ($(A) + (0, 0.2 * \r)$) {\footnotesize$|||$};
		\end{scope}

	    \shade [myspherestyle] (O2) circle (\r);
	    \draw (O2) circle (\r);
	
	    \node (N) at (1.55 * \r, 0.7 * \r) {$N$};
	    \node (A') at (1.33 * \r, -0.67 * \r) {};
		\draw (A') node[anchor=north] {$q$};
	    \node (B') at (0.88 * \r, 0.2 * \r) {}; 
		\draw (B') node[anchor=south] {$h(x)$};
	    \node (C') at (1.82 * \r, 0.067 * \r) {}; 
		\draw (C') node[anchor=south] {$\mathhat{h}(x)$};
		\node (A'B'_control) at (1.074 * \r, -0.273 * \r) {};
		\node (A'C'_control) at (1.605 * \r, -0.333 * \r) {};
		\node (B'C'_control) at (1.35 * \r, 0.152 * \r) {};
	
	    \draw plot[smooth, tension=1.0] coordinates { (A') (A'B'_control) (B') };
	    \draw plot[smooth, tension=1.0] coordinates { (A') (A'C'_control) (C') };
	    \draw[dashed] plot[smooth, tension=1.0] coordinates { (B') (B'C'_control) (C') };
	
		\node at (A'B'_control) {\footnotesize\rotatebox[origin=c]{-60}{$|$}};
		\node at (A'C'_control) {\footnotesize\rotatebox[origin=c]{57}{$||$}};
	
		\begin{scope}
			\node (L') at ($(O2) + (0.5 * \r, -0.5 * \r)$) {};
			\node (R') at ($(O2) + (-0.5 * \r, -0.5 * \r)$) {};
	
			\clip plot[smooth, tension=1.0] coordinates { (A') (A'B'_control) (B') (L') };
	    	\clip plot[smooth, tension=1.0] coordinates { (A') (A'C'_control) (C') (R') };
	    	\draw (A') circle (0.2 * \r);
			\node () at ($(A') + (0, 0.2 * \r)$) {\footnotesize$|||$};
		\end{scope}
	
	\end{tikzpicture}
	\caption{Geodesic triangles defined by $p$, $f$, and $\mathhat{f}$.
	The map from $M$ to $N$ is $\operatorname{exp}_{N, q} \circ \operatorname{log}_{M, p}$.
	}
	\label{fig:geodesic_triangle2}
\end{figure}
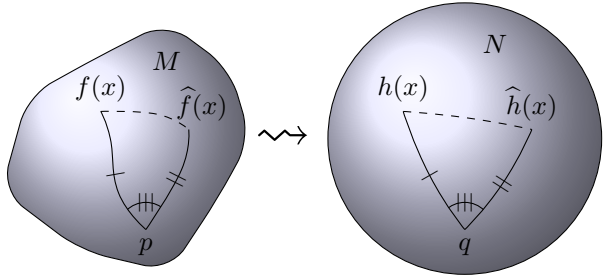

	Let $N$ be the manifold of constant curvature $H$ and let $q \in N$.
    Fix some isometry $T_p M \sim T_q N$ and let $\operatorname{exp}_{N, q} \from S \to N$ be the manifold exponential on $N$.
	Define $h = {\operatorname{exp}_{N, q}} \circ g$ and $\mathhat{h} = {\operatorname{exp}_{N, q}} \circ \mathhat{g}$.
	By \cref{prop:toponogov},
	\begin{align}
		d_M(f(x), \mathhat{f}(x)) \leq d_N(h(x), \mathhat{h}(x)).
	\end{align}
	Now our task is to bound the right-hand side of this inequality.

	For brevity, from now on we drop the argument $x$ of the functions $f$, $g$, and $h$.
	
	{\bf Case 1: $H \geq 0$.}
	Without loss of generality, let $H = 0$.
	Then $N = T_p M$ and so
	\begin{align}
		d_N(h, \mathhat{h}) 
		={}& \norm{g - \mathhat{g}}_p
		\leq \epsilon.
	\end{align}

	{\bf Case 2: $H < 0$.}
	Consider the geodesic triangle $(h(x), \mathhat{h}(x), q)$.
	Define $c(t) = \cosh( t \sqrt{\abs{H}} )$ and $s(t) = \sinh( t \sqrt{\abs{H}} )$.
	Then \cref{lemma:law_of_cosines} says that
	\begin{align}
		c(d(h, \mathhat{h})) ={}&
		c(\norm{g}_p) c(\norm{\mathhat{g}}_p) - s(\norm{g}_p) s(\norm{\mathhat{g}}_p) \cos{\left( \angle(g, \mathhat{g}) \right)}.%
		\label{eq:exact_expression_for_cosh_of_rhs}
	\end{align}
	Since ${\sinh}$ is positive on $\reals^{+}$,
	\begin{align}
		s(\norm{g}_p) s(\norm{\mathhat{g}}_p) \geq 0.
	\end{align}
	Hence using
	\begin{align}
		\cos{\left( \angle(g, \mathhat{g}) \right)} = \frac{\innerproduct{g}{\mathhat{g}}_p}{\norm{g}_p \norm{\mathhat{g}}_p}
		= \frac{\norm{g}_p^2 + \norm{\mathhat{g}}_p^2}{2 \norm{g}_p \norm{\mathhat{g}}_p} - \frac{\norm{g - \mathhat{g}}_p^2}{2 \norm{g}_p \norm{\mathhat{g}}_p}
		\geq 1 - \frac{\norm{g - \mathhat{g}}_p^2}{2 \norm{g}_p \norm{\mathhat{g}}_p}%
		\label{eq:bound_for_cos}
	\end{align}
	in \cref{eq:exact_expression_for_cosh_of_rhs} gives us
	\begin{align}
		c(d(h, \mathhat{h})) \leq{}&
		c(\norm{g}_p) c(\norm{\mathhat{g}}_p) - s(\norm{g}_p) s(\norm{\mathhat{g}}_p) \left( 1 - \frac{\norm{g - \mathhat{g}}_p^2}{2 \norm{g}_p \norm{\mathhat{g}}_p} \right) \\
		={}& c( \norm{g}_p - \norm{\mathhat{g}}_p ) + \frac{\norm{g - \mathhat{g}}_p^2 s(\norm{g}_p) s(\norm{\mathhat{g}}_p)}{2 \norm{g}_p \norm{\mathhat{g}}_p}.
	\end{align}
	Since ${\cosh}$ is even and increasing on $\reals^{+}$, we may use the reverse triangle inequality to see that
	\begin{align}
		c(d(h, \mathhat{h})) \leq{}& c(\norm{g - \mathhat{g}}_p) + \frac{\norm{g - \mathhat{g}}_p^2 s(\norm{g}_p) s(\norm{\mathhat{g}}_p)}{2 \norm{g}_p \norm{\mathhat{g}}_p}.%
		\label{eq:where_we_used_reverse_triangle_inequality}
	\end{align}
	Furthermore, noting that $\frac{\sinh{x}}{x}$ is increasing and positive on $\reals^+$, we have that
	\begin{align}
		c(d(h, \mathhat{h})) \leq{}& c(\norm{g - \mathhat{g}}_p) + \frac{\norm{g - \mathhat{g}}_p^2\, s(\sigma)^2}{2 \sigma^2}.
	\end{align}
	This is equivalent to
	\begin{align}
		1 + 2 s\mleft( \frac{d(h, \mathhat{h})}{2} \mright)^2 \leq{}& 1 + 2 s\mleft( \frac{\norm{g - \mathhat{g}}_p}{2} \mright)^2 + \frac{\norm{g - \mathhat{g}}_p^2 s(\sigma)^2 }{2 \sigma^2},%
		\label{eq:where_we_used_cosh_double_angle_formula}
	\end{align}
	so that
	\begin{align}
		d(h, \mathhat{h}) \leq{}& 2 \inverse{s} \mleft( \sqrt{
			s\mleft( \frac{\norm{g - \mathhat{g}}_p}{2} \mright)^2 + \frac{\abs{g - \mathhat{g}}^2\, s(\sigma)^2 }{4 \sigma^2}
			} \mright),
	\end{align}
	and since $\operatorname{sinh}$, $\operatorname{arcsinh}$, $\cdot^{2}$, and $\sqrt{\cdot}$ are all increasing on $\reals^{+}$, we find
	\begin{align}
		d(h, \mathhat{h}) \leq{}& 2 \inverse{s} \mleft( \sqrt{
			s\mleft( \frac{\epsilon}{2} \mright)^2 + \frac{\epsilon^2 s(\sigma)^2}{4 \sigma^2}
			} \mright).
	\end{align}
	Lastly we will use that concave functions are \emph{subadditive}, $\sqrt{a^2 + b^2} \leq a + b$ and $\operatorname{arcsinh}(\sinh{a} + \sinh{b}) \leq a + b$ for nonnegative $a$ and $b$.
	Thus
	\begin{align}
		d(h, \mathhat{h}) \leq{}& \epsilon + 2 \inverse{s}\mleft(
			\frac{\epsilon s(\sigma)}{2 \sigma}
			\mright).%
		\label{eq:where_we_used_subadditivity}
	\end{align}
	This concludes the proof.
\end{proof}

\subsection{Retractions}%
\label{sub:Retractions}

\new{%
For many manifolds that appear in numerical analysis, we know the exponential and logarithmic maps.
They can also often be computed efficiently.
For concrete examples of such manifolds, see the various references in \cref{sec:Manifolds with lower bounded sectional curvature}.
}

However, even if the exponential or logarithmic map are not known for a manifold, we might still be able to proceed by using a \emph{retraction}.
This is an injective map $r_p \from S \to M$, \new{for some open subset $S \subset T_p M$}, that approximates the exponential map at $p$.
For a precise definition, see Absil, Mahony, and Sepulchre~\cite[definition 4.1.1]{Absil08}.

\new{%
\begin{corollary}\label{cor:retraction}
	Assume the same conditions as in \cref{thm:manifold_error_bound}.
	Moreover, assume the maximum (Riemannian) error between $r_p$ and $\operatorname{exp}_p$ is bounded by $\zeta$, assume the maximum (Euclidean) error between $\inverse{r}_p$ and $\operatorname{log}_p$ is bounded by $\eta$, and assume that the vector-valued approximation scheme $\mathcal{F} \from g \mapsto \mathhat{g}$ is $\reals$-linear with operator max norm $\Lambda$.
	Now, instead of using $\mathhat{f} = ({\operatorname{exp}_p} \circ \mathcal{F} \circ {\operatorname{log}_p})(f)$, define $\mathhat{f} = (r_p \circ \mathcal{F} \circ \inverse{r_p})(f)$.
	Then
	\begin{align}
		d_M(f(x), \mathhat{f}(x)) \leq{}& \epsilon + \Lambda \eta + \zeta &\text{if $H \geq 0$,}\\
		d_M(f(x), \mathhat{f}(x)) \leq{}& \epsilon + \Lambda \eta + \zeta + \frac{2}{\sqrt{\abs{H}}} \operatorname{arcsinh}\mleft( \frac{(\epsilon + \Lambda \eta) \sinh(\sigma \sqrt{\abs{H}})}{2 \sigma} \mright) &\text{if $H < 0$.}
	\end{align}
\end{corollary}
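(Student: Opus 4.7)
The plan is to reduce the statement to a direct application of \cref{thm:manifold_error_bound} by recasting the retraction-based scheme as an ordinary tangent-space approximation of $g = \operatorname{log}_p \circ f$ (with a slightly enlarged tangent-space error), and then handling the final exponential-versus-retraction discrepancy separately via the triangle inequality on $M$. In particular, no new Toponogov-type comparison argument is needed: all of the curvature bookkeeping is inherited from \cref{thm:manifold_error_bound}.

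First I would rewrite $\mathhat{f}$ in a form amenable to \cref{thm:manifold_error_bound}. Set $\mathtilde{g} := \inverse{r_p} \circ f$ and $\mathhat{g}' := \mathcal{F}(\mathtilde{g})$, so that $\mathhat{f} = r_p \circ \mathhat{g}'$. Then $\mathhat{g}'$ is a tangent-space approximation of $g$. By hypothesis $\norm{\mathtilde{g}(x) - g(x)}_p \leq \eta$ for all $x$, so the $\reals$-linearity of $\mathcal{F}$ together with its max operator norm $\Lambda$ yields
\begin{align}
	\norm{\mathhat{g}'(x) - \mathhat{g}(x)}_p = \norm{\mathcal{F}(\mathtilde{g} - g)(x)}_p \leq \Lambda \eta,
\end{align}
where $\mathhat{g} := \mathcal{F}(g)$ is the exponential-based approximation from \cref{thm:manifold_error_bound}. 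Combining this with the assumed error $\norm{g(x) - \mathhat{g}(x)}_p \leq \epsilon$ via the triangle inequality on $T_p M$ gives
\begin{align}
	\norm{g(x) - \mathhat{g}'(x)}_p \leq \epsilon + \Lambda \eta.
\end{align}

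Next I would apply \cref{thm:manifold_error_bound} to the exponential-based scheme $\operatorname{exp}_p \circ \mathhat{g}'$ with the enlarged tangent-space error $\epsilon + \Lambda \eta$, obtaining a bound on $d_M(f(x), \operatorname{exp}_p(\mathhat{g}'(x)))$ of the form \cref{eq:manifold_error_bound1} (when $H \geq 0$) or \cref{eq:manifold_error_bound2} (when $H < 0$), with $\epsilon$ replaced by $\epsilon + \Lambda \eta$. Finally, the triangle inequality on $M$ gives
\begin{align}
	d_M(f(x), \mathhat{f}(x)) \leq d_M(f(x), \operatorname{exp}_p(\mathhat{g}'(x))) + d_M(\operatorname{exp}_p(\mathhat{g}'(x)), r_p(\mathhat{g}'(x))),
\end{align}
and the second term is bounded by $\zeta$ by the retraction hypothesis. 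Adding $\zeta$ to each of the two cases produces the stated bounds.

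The main subtlety — really the only one — is ensuring that $\mathtilde{g}$ and $\mathhat{g}'$ still take values in the geodesically convex normal coordinate chart $S$ of radius $\sigma$ required by \cref{thm:manifold_error_bound}. Since $\norm{\mathtilde{g}}_p \leq \sigma + \eta$ and $\norm{\mathhat{g}'}_p \leq \Lambda(\sigma + \eta)$, this is implicitly absorbed into the phrase ``assume the same conditions as in \cref{thm:manifold_error_bound}'', possibly after inflating $\sigma$ by a factor depending on $\Lambda$ and $\eta$; once this is granted, the proof reduces to the two triangle inequalities above plus one black-box invocation of \cref{thm:manifold_error_bound}.
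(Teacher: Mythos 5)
Your proposal is correct and follows essentially the same route as the paper: both reduce to \cref{thm:manifold_error_bound} by bounding $\norm{g - (\mathcal{F} \circ \inverse{r_p})(f)}_p \leq \epsilon + \Lambda\eta$ via the triangle inequality and linearity of $\mathcal{F}$, then add $\zeta$ for the final discrepancy between $r_p$ and $\operatorname{exp}_p$. Your write-up is in fact more explicit than the paper's (which leaves the $\zeta$ step and the domain caveat about $S$ implicit), but the underlying argument is identical.
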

}

\new{%
\begin{proof}
	We can reduce to \cref{thm:manifold_error_bound} by letting $\mathhat{g}_1 = (\mathcal{F} \circ \inverse{r_p})(f)$, $\mathhat{g}_2 = (\mathcal{F} \circ \operatorname{log}_p)(f)$, and noticing that
	\begin{align}
		\norm{g - \mathhat{g}_1} \leq{}& \norm{g - \mathhat{g}_2} + \norm{\mathhat{g}_2 - \mathhat{g_1}}
		\leq \norm{g - \mathcal{F}(g)} + \Lambda \norm{\inverse{r_p}(f) - \operatorname{log}_p(f)}
		\leq \epsilon + \Lambda \eta.
	\end{align}
\end{proof}
}

\new{%
If $M$ is an embedded submanifold, one can construct a retraction around a point $p \in M$ by embedding the tangent space and then projecting from there back to $M$.
Alternatively, one can correct any given smooth chart around $p$ to a retraction around $p$.
For Riemannian homogeneous manifolds, defining a retraction amounts to approximating the matrix exponential.
See the references in \cite[section 4.10]{Absil08} on how to implement retractions efficiently for different manifolds.
See also Absil and Oseledet's~\cite{Absil15} survey of retractions on the manifold of fixed-rank matrices, for which the exponential or logarithmic maps are not known.
The bottom row of their tables 1 and 2 list what we in \cref{cor:retraction} call $\zeta$.
}

\subsection{Condition numbers}%
\label{sub:Condition numbers}

When $H \to 0^-$, \cref{eq:manifold_error_bound2} tends to
\begin{align}
	d_M(f, \mathhat{f}) \leq 2 \epsilon,
\end{align}
which shows that it is not a tight bound.
To explain why, first note that \cref{eq:bound_for_cos} is an equality when $\norm{g(x)}_p = \norm{\mathhat{g}(x)}_p$, but we also used the reverse triangle inequality in \cref{eq:where_we_used_reverse_triangle_inequality} which is an equality when $g$ and $\mathhat{g}$ are collinear.
Furthermore, in \cref{eq:where_we_used_subadditivity} we used subadditivity for the square root, which is an equality only when one of the terms is $0$.
But for small curvatures and small $\epsilon$, the two terms in \cref{eq:where_we_used_subadditivity} are approximately equal.
One could say that we lost a factor $\sqrt{2}$ in each of these steps.

Even though the bound \cref{eq:manifold_error_bound2} is not tight, there is, in the following sense, no better bound.
\begin{proposition}\label{prop:lower_bound_for_error_bound}
Recall the notation of \cref{thm:manifold_error_bound}.
Assume that $M$ has constant negative sectional curvature $H$ and let $\mathhat{g}$ be an approximation to $g$ such that the maximum error $\epsilon$ is attained at a point $x \in R$ with $\norm{\mathhat{g}(x)}_p = \norm{g(x)}_p = \sigma$. Then
	\begin{align}
		d_M(f(x), \mathhat{f}(x)) =
			\frac{2}{\sqrt{\abs{H}}}
			\operatorname{arcsinh}\mleft(
				\frac{\epsilon \sinh{(\sigma \sqrt{\abs{H}})}}{2 \sigma}
				\mright).
	\end{align}
\end{proposition}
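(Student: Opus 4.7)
The overall approach is to observe that, under the equality hypotheses of the proposition, every inequality used in the proof of \cref{thm:manifold_error_bound} collapses to an equality. Since $M$ has constant sectional curvature $H$, it is (locally) isometric to the model manifold $N$, so \cref{prop:toponogov} is sharp: the triangle with vertices $p$, $f(x)$, $\mathhat{f}(x)$ in $M$ can be identified with its comparison triangle in $N$. Consequently \cref{lemma:law_of_cosines} applies exactly, not merely as a bound, and the proof reduces to a direct computation.

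The first step is to set up the geodesic triangle with vertices $p$, $f(x)$, $\mathhat{f}(x)$. The two sides emanating from $p$ have lengths $\norm{\log_p f(x)}_p = \norm{g(x)}_p = \sigma$ and $\norm{\log_p \mathhat{f}(x)}_p = \norm{\mathhat{g}(x)}_p = \sigma$, and the angle at $p$ equals $\angle(g(x), \mathhat{g}(x))$ measured by the inner product on $T_p M$. Writing $d := d_M(f(x), \mathhat{f}(x))$ and applying \cref{lemma:law_of_cosines} with $A = B = \sigma$ and opposite angle $c = \angle(g(x), \mathhat{g}(x))$ gives
\begin{equation*}
\cosh(d \sqrt{\abs{H}}) = \cosh^2(\sigma \sqrt{\abs{H}}) - \sinh^2(\sigma \sqrt{\abs{H}}) \cos c.
\end{equation*}
The second step is to compute $\cos c$ exactly using that $\norm{g(x)}_p = \norm{\mathhat{g}(x)}_p = \sigma$: the polarization identity $\norm{g(x) - \mathhat{g}(x)}_p^2 = 2 \sigma^2 (1 - \cos c)$ yields $\cos c = 1 - \epsilon^2 / (2 \sigma^2)$, which is precisely equality in \cref{eq:bound_for_cos}. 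Substituting and using $\cosh^2 - \sinh^2 = 1$ collapses the right-hand side to $1 + \epsilon^2 \sinh^2(\sigma \sqrt{\abs{H}}) / (2 \sigma^2)$. The final step is to apply the double-angle identity $\cosh(2y) = 1 + 2 \sinh^2(y)$ with $y = d \sqrt{\abs{H}} / 2$ to the left-hand side and invert $\sinh$ on $\reals^{+}$ to isolate $d$, producing the claimed formula.

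There is no serious obstacle here: the equal-norm assumption annihilates the reverse triangle inequality step of \cref{eq:where_we_used_reverse_triangle_inequality} and makes the subadditivity estimate of \cref{eq:where_we_used_subadditivity} redundant, while the constant-curvature assumption removes the Toponogov gap. The only point requiring brief verification is that the geodesic triangle lies inside the geodesically convex normal neighbourhood so that the minimal geodesic from $f(x)$ to $\mathhat{f}(x)$ exists and its length is governed by \cref{lemma:law_of_cosines}; this is inherited from the standing hypotheses of \cref{thm:manifold_error_bound}.
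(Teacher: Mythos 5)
Your proposal is correct and follows essentially the same route as the paper: the paper's proof likewise observes that the equal-norm hypothesis turns \cref{eq:bound_for_cos} into an equality, removes the need for the reverse triangle inequality, and reduces \cref{eq:where_we_used_cosh_double_angle_formula} to the exact identity $1 + 2\sinh^2(d\sqrt{\abs{H}}/2) = 1 + \epsilon^2\sinh^2(\sigma\sqrt{\abs{H}})/(2\sigma^2)$, which inverts to the claimed formula. Your version is merely more self-contained (recomputing via the law of cosines and noting explicitly that constant curvature makes the Toponogov comparison an equality, a point the paper leaves implicit).
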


\begin{proof}
	Consider again the proof of \cref{thm:manifold_error_bound}.
	If $\norm{g}_p = \norm{\mathhat{g}}_p$, then \cref{eq:bound_for_cos} is an equality, and we do not need to use the reverse triangle inequality in \cref{eq:where_we_used_reverse_triangle_inequality}, hence \cref{eq:where_we_used_cosh_double_angle_formula} reduces to
	\begin{align}
		1 + 2 s\mleft( \frac{d_M(f, \mathhat{f})}{2} \mright)^2 ={}& 1 + \frac{\epsilon^{2} s(\sigma)^2}{2 \sigma^2}.
	\end{align}
 	This is equivalent to what we wanted to prove.
\end{proof}

As an immediate consequence of \cref{thm:manifold_error_bound} we can obtain bounds on the propagation of a small perturbation of a tangent vector through the exponential map $\mathrm{exp}_p$.
The size of this output perturbation relative to the input perturbation is quantified by the \emph{condition number}.
Recall the definition of Rice's condition number~\cite{Rice66} of a map $\phi \from A \to B$ between metric spaces $(A, d_A)$ and $(B, d_B)$:
\begin{align}
	\kappa[\phi](x) = \limsup_{\substack{x' \in A,\\ d_A(x, x') \to 0}} \frac{d_B(\phi(x), \phi(x'))}{d_A(x, x')}.
\end{align}
Let $\phi = \mathrm{exp}_p \from (R \to S) \to (R \to M)$, with the metrics
\begin{align}
	d_{R \to S}(g, \mathhat{g}) ={} \sup_{x \in R} \norm{g(x) - \mathhat{g}(x)}_p,\qq{and}
	d_{R \to M}(f, \mathhat{f}) ={} \sup_{x \in R} d_M(f(x), \mathhat{f}(x)).
\end{align}
The condition number implies an asymptotically sharp error bound
\begin{align}
	d_{R \to M} \mleft( \mathrm{exp}_p \circ g, \mathrm{exp}_p \circ \mathhat{g} \mright) \leq{}& \kappa[\mathrm{exp}_p](g) d_{R \to S}(g, \mathhat{g}) + o( d_{R \to S}(g, \mathhat{g}) )\\
	={}& \kappa[\mathrm{exp}_p](g) \epsilon + o( \epsilon ),
\end{align}

\begin{proposition}\label{cor:condition_bounds}
	Recall the notation of \cref{thm:manifold_error_bound}.
	The condition number $\kappa$ of the map $g \mapsto f$ satisfies
	\begin{align}
		1 \leq{} \kappa \leq{}& 1 + \frac{\sinh(\sigma \sqrt{\abs{H}})}{\sigma \sqrt{\abs{H}}} \quad & \text{if } H < 0, \\ 
		\kappa ={}& 1 \quad & \text{if } H \geq 0.
	\end{align}
	Furthermore, if $M$ is a model manifold with constant negative sectional curvature $H$, then it is also lower bounded by
	\begin{align}
		\frac{\sinh(\sigma \sqrt{\abs{H}})}{\sigma \sqrt{\abs{H}}} \leq \kappa.
	\end{align}
\end{proposition}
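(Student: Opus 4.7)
The plan is to obtain the three bounds by combining \cref{thm:manifold_error_bound}, \cref{prop:lower_bound_for_error_bound}, and an elementary radial construction. For the upper bounds, since $\kappa$ is defined as a $\limsup$ of ratios, I would substitute the estimates of \cref{thm:manifold_error_bound} into the defining quotient with $\epsilon := d_{R \to S}(g, \mathhat{g}) \to 0$. When $H \geq 0$, \cref{eq:manifold_error_bound1} immediately gives $d_{R \to M}(f, \mathhat{f})/\epsilon \leq 1$, so $\kappa \leq 1$. When $H < 0$, I would use the expansion $\operatorname{arcsinh}(u) = u + O(u^{3})$ to linearize \cref{eq:manifold_error_bound2}, yielding
\begin{align*}
\frac{d_{R \to M}(f, \mathhat{f})}{\epsilon} \leq 1 + \frac{\sinh(\sigma \sqrt{\abs{H}})}{\sigma \sqrt{\abs{H}}} + O(\epsilon^{2}),
\end{align*}
and the desired upper bound follows in the limit.

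For the universal lower bound $\kappa \geq 1$, I would exhibit a one-parameter family of perturbations along which the ratio equals $1$ exactly. Given $g$ with $\norm{g(x)}_p > 0$, define the radial perturbation $\mathhat{g}_t(x) := g(x) + t \cdot g(x)/\norm{g(x)}_p$ for small $t > 0$, so that $d_{R \to S}(g, \mathhat{g}_t) = t$. At each $x$, the points $f(x) = \operatorname{exp}_p g(x)$ and $\mathhat{f}_t(x) = \operatorname{exp}_p \mathhat{g}_t(x)$ lie on the same radial geodesic through $p$, at arclengths $\norm{g(x)}_p$ and $\norm{g(x)}_p + t$ respectively, so $d_M(f(x), \mathhat{f}_t(x)) = t$ and hence $d_{R \to M}(f, \mathhat{f}_t) = t$. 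Taking $t \to 0$ gives $\kappa \geq 1$, which combined with the upper bound proves $\kappa = 1$ whenever $H \geq 0$.

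For the sharper lower bound on a model manifold with constant $H < 0$, I would invoke \cref{prop:lower_bound_for_error_bound}. Assume there is an $x_0 \in R$ with $\norm{g(x_0)}_p = \sigma$ (the value at which the supremum defining $\sigma$ is attained), and construct a family of perturbations $\mathhat{g}_\epsilon$ agreeing with $g$ away from $x_0$ and satisfying $\norm{\mathhat{g}_\epsilon(x_0)}_p = \sigma$ and $\norm{g(x_0) - \mathhat{g}_\epsilon(x_0)}_p = \epsilon$, obtained by sliding $g(x_0)$ along the $\sigma$-sphere in $T_p M$. Since the perturbation vanishes off a neighbourhood of $x_0$, the sup metric gives $d_{R \to S}(g, \mathhat{g}_\epsilon) = \epsilon$. \Cref{prop:lower_bound_for_error_bound} yields the exact equality
\begin{align*}
d_M(f(x_0), \mathhat{f}_\epsilon(x_0)) = \frac{2}{\sqrt{\abs{H}}} \operatorname{arcsinh}\left( \frac{\epsilon \sinh(\sigma \sqrt{\abs{H}})}{2 \sigma} \right),
\end{align*}
and dividing by $\epsilon$ and letting $\epsilon \to 0$ with $\operatorname{arcsinh}(u) \sim u$ gives the claimed lower bound $\sinh(\sigma \sqrt{\abs{H}})/(\sigma \sqrt{\abs{H}})$.

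There is no genuine obstruction; the main subtlety is bookkeeping between the sup metrics on $R \to S$ and $R \to M$ and the pointwise identities coming from \cref{prop:lower_bound_for_error_bound}. This is handled by arranging the perturbations so that both sides of the condition-number quotient are realized at the same distinguished point $x_0$, which is ensured by supporting the perturbation near $x_0$.
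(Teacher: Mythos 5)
Your proposal is correct and follows essentially the same route as the paper: the upper bounds come from linearizing the estimates of \cref{thm:manifold_error_bound} in $\epsilon$, the bound $\kappa \geq 1$ comes from the fact that $\operatorname{exp}_p$ is a radial isometry (your radial perturbation family), and the model-manifold lower bound comes from \cref{prop:lower_bound_for_error_bound} applied to a norm-preserving perturbation. The paper states these three steps as a one-line sketch; you have simply supplied the details.
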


\new{%
\begin{proof}
	The upper bound is a corollary of \cref{thm:manifold_error_bound}.
	Moreover, $1 \leq \kappa$ since $\operatorname{exp}_p$ is a radial isometry.
	When $M$ is a model manifold, $\kappa$ is lower bounded by \cref{prop:lower_bound_for_error_bound}.
\end{proof}
}

The condition number grows exponentially in both $\sigma$ and $\sqrt{\abs{H}}$.
For example, such innocent looking parameters as $H = -10$ and $\sigma = 10$ gives a condition number of almost $9\cdot 10^{11}$, potentially losing about 12 digits of accuracy.
In general, if we want to be guaranteed a reasonable condition number, we should make sure that the inputs have small norm: $\sigma \leq 1 / \sqrt{\abs{H}}$.

\new{%
Zimmermann~\cite[Theorem 3.1]{Zimmermann20} derives a special case of \cref{cor:condition_bounds} for the Stiefel manifold using the Jacobi equation.
Diepeveen et al.~\cite[Theorem 3.4]{Diepeveen23} arrive at a similar expression when they estimate tensor compression errors for data on locally symmetric manifolds, also using the Jacobi equation.
The Jacobi equation only encodes first-order information, and so an advantage of our approach is that we can derive \emph{exact} error bounds with Toponogov's theorem.
}

\section{A concrete implementation}
\label{sec:Algorithm}

We now want to approximate a function $f \from [-1, 1]^m \to M$, where $M$ is a Riemannian manifold.
In the notation of \cref{sec:Error_bounds}, $R = [-1, 1]^m$.
\Cref{alg:manifold_approximation_scheme} details a concrete realization of the approximation template discussed in \cref{sub:Contribution} that uses tensorized Chebyshev interpolation.
We describe steps \ref{item:step1} and \ref{item:step2} of the template in detail.
The final step consists of applying the exponential map to the constructed approximation, which requires no further discussion.

\begin{algorithm}[t]
\caption{Approximating a map into a manifold.}\label{alg:manifold_approximation_scheme}
\begin{algorithmic}[1]
	\REQUIRE $f \from {[-1, 1]}^m \to M$, number $N_k + 1$ of Chebyshev nodes to use in each direction $k = 1$, \dots, $m$.
	\ENSURE Approximant $\mathhat{f} \from [-1, 1]^m \to M$ of $f$ satisfies the error bound of \cref{thm:manifold_error_bound}.
	\STATE Choose $p \in M$ as (an approximation of) the Karcher mean of a large number of $f(x)$ for $x$ chosen from a uniform distribution on ${[-1, 1]}^{m}$.
	\STATE Let $g = {\operatorname{log}_p} \circ f$.
	\FOR{$k = 1$, \dots, $m$}
		\FOR{$i_k = 1$, \dots, $N_k + 1$}
			\STATE Let $t_{i_k} = \cos\mleft( (2 i_k - 1) \pi / (2 (N_k + 1)) \mright)$ be the $i_k$th Chebyshev node.
			\FOR{$j = 1$, \dots, $n$}
				\STATE Let $G_{i_1 \dots i_m j} = g_{j}(t_{i_1}, \dots, t_{i_m})$.
			\ENDFOR
		\ENDFOR
	\ENDFOR
	\STATE Let $\mathhat{G}_{i_1 \dots i_m j} = \sum_{a_1, \dots, a_m, b} C_{a_1 \dots a_m b} U^{(1)}_{i_1 a_1} \dots U^{(m)}_{i_m a_m} V_{b j}$ be an ST-HOSVD of $G$.
	\FOR{$k = 1$, \dots, $m$}
		\STATE Let $h^{(k)}_{a_k}$ be the degree-$N_k$ interpolating polynomial satisfying $h^{(k)}_{a_k}(t_{i_k}) = U^{(k)}_{i_k a_k}$.
	\ENDFOR
		\FOR{$j = 1$, \dots, $n$}
			\STATE Let $\mathhat{g}_{j}(x) = \sum_{a_1, \dots, a_m, b} C_{a_1 \dots a_m b} h^{(1)}_{a_1}(x_1) \dots h^{(m)}_{a_m}(x_m) V_{b j}$.
	\ENDFOR
	\STATE Let $\mathhat{f} = {\operatorname{exp}_p} \circ \mathhat{g}$.
	\STATE Return $\mathhat{f}$.
\end{algorithmic}
\end{algorithm}

\subsection{Choosing from where to linearize}

\new{
In practice, the choice of the point $p \in M$ in \cref{thm:manifold_error_bound} affects the error a lot.
\Cref{prop:lower_bound_for_error_bound} says that the further away from $p$ we get, the bigger the error can be.
It is thus natural to minimize the distance to $p$.
Selecting the optimal $p$ is a non-trivial problem, and is out of scope for this article.
However, a related problem that has been studied in detail is the generalization of means and medians to manifolds.
We will leverage this to define a heuristic for choosing $p$.
}

Consider a sequence $(\xi_i)_{i = 1}^{N}$ of points in $R$, sampled independently from the same distribution on $R$.
Then, consider the total squared distance to $p$,
\begin{align}
    \sum_{i = 1}^{N} d_M(f(\xi_i), p)^{2}.\label{eq:distance_sum}
\end{align}
A $p$ that minimizes~\cref{eq:distance_sum} is called a \emph{Karcher mean}, or \emph{Riemannian center of mass}, of $(f(\xi_i))_{i = 1}^{N}$~\cite{Karcher1977}.
\new{%
There are many situations where it is guaranteed to be unique, given the data $\xi_i$.
For example, it is unique if the data is contained in a geodesic ball whose radius is less than half the injectivity radius of the exponential map.%
\footnote{This can be shown by combining Klingenberg's lemma~\cite[lemma 6.4.7]{Petersen16} with Kendall's uniqueness result~\cite[(8.5)]{Arnaudon13}.}
In practice, the Karcher mean can be found via gradient descent~\cite[section 7.3.4]{Nielsen13}, but also approximated efficiently using successive geodesic interpolation~\cite{Chakraborty20,Chakraborty15,Chakraborty19,Cheng16}, namely
}
\begin{align}
	m(p_1) ={}& p_1,\\
	m(p_1, \dots, p_N) ={}& \operatorname{exp}_{p_N} \mleft(\frac{N - 1}{N} \operatorname{log}_{p_N} m(p_1, \dots, p_{N - 1}) \mright).
\end{align}
\new{%
For a general Riemannian manifold $M$, $m(p_1, \dots, p_N)$ depends on the order of $p_1$, \dots, $p_N$, but note that in the Euclidean case it is just the arithmetic mean of $(p_i)_{i = 1}^{N}$, and thus exactly the Karcher mean.
}

In our implementation, we choose $p$ as $m(f(\xi_1), \dots, f(\xi_N))$ with $(\xi_i)_{i = 1}^{N}$ sampled from a uniform distribution on $R$.

\subsection{Tensorized Chebyshev interpolation}%
\label{sub:Tensorized Chebyshev interpolation}

For the second step in our implementation, we choose \emph{tensorized Chebyshev interpolation}.
Tensorizing a univariate approximation scheme is a standard technique for multivariate approximation.
\new{%
Here, we recall Schultz's~\cite{Schultz1969} approach.
Let $C(A \to B)$ denote the space of continuous functions from $A$ to $B$ and say, for example, that we have an approximation scheme,
\begin{align}
	\mathcal{F} \from C([-1, 1] \to \reals) \to C([-1, 1] \to \reals),
\end{align}
for continuous univariate functions.
We identify an approximation scheme,
\begin{align}
	\underbrace{\mathcal{F} \otimes \dots \otimes \mathcal{F}}_{\times m} \from C([-1, 1]^{m} \to \reals) \to C([-1, 1]^{m} \to \reals),
\end{align}
for continuous multivariate functions via $C([-1, 1] \to \reals)^{\otimes m} = C([-1, 1]^{m} \to \reals)$.
}

\new{%
Recall that the Chebyshev interpolation scheme $\mathcal{I}_{n}$ interpolates a function on $[-1, 1]$ in the \emph{Chebyshev nodes} $t_k = \cos( (2 k - 1) \pi / (2 n) )$, $k = 1$, \dots, $n$.
Chebyshev interpolation has many nice properties.
Specifically, the approximation error and operator norm are bounded.
See for example~\cite[Sections 7, 8, and 15]{Trefethen13}.
Combining \cite[Theorem 2.1]{Schultz1969} with \cite[Theorem 8.2]{Trefethen13} gives a concrete error bound for tensorized Chebyshev interpolation.
}

\new{%
\begin{corollary}\label{prop:multivariate_chebyshev_error_bound_for_analytic_functions}
	Let $g \from [-1, 1]^{m} \to \reals$.
	Assume $g$ is analytic in its $k$th argument with an analytic continuation inside the Bernstein ellipse $(\rho_k \exp{i \theta} + \inverse{\rho_k} \exp{-i \theta}) / 2$ bounded by $C_k$.
	Define $\mathhat{g} = (\mathcal{I}_{N_1} \otimes \dots \otimes \mathcal{I}_{N_m}) (g + e)$, where $e$ represents some evaluation error.
	Then $\mathhat{g} \from [-1, 1]^{m} \to \reals$ satsifies
	\begin{align}
		\norm{g - \mathhat{g}}_\infty \leq{}&
			\frac{4 C_1}{(\rho_1 - 1) \rho_1^{N_1}}
			+ \frac{4 \Lambda_{N_1} C_2}{(\rho_2 - 1) \rho_2^{N_2}}
			+ \dots
			+ \frac{4 \Lambda_{N_1} \cdots \Lambda_{N_{m - 1}} C_m}{(\rho_m - 1) \rho_m^{N_m}} \nonumber\\
		& + \Lambda_{N_1} \cdots \Lambda_{N_m} \norm{e}_{\infty},%
		\label{eq:multivariate_chebyshev_error_bound_for_analytic_functions}
	\end{align}
	where $\Lambda_{N} \leq \frac{2}{\pi} \log(N + 1) + 1$ and $\norm{e}_{\infty}$ is the max norm of $e$.
	Especially, if $N_1 = \dots = N_m = N$, $\rho_1 = \dots = \rho = \rho$, and $C_1 = \dots = C_m = C$, then
	\begin{align}\label{eq:multivariate_chebyshev_error_bound_for_analytic_functions_simplified}
		\norm{g - \mathhat{g}}_\infty \leq{}&
			\frac{4 (\Lambda_N^{m} - 1) C}{(\rho - 1) \rho^{N} (\Lambda_N - 1)}
			+ \Lambda_N^m \norm{e}_{\infty}.
	\end{align}
\end{corollary}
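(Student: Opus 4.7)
The plan is to split the error into a pure interpolation piece acting on $g$ and a propagated data-error piece acting on $e$, then reduce the multivariate statement to the univariate Chebyshev error bound by a standard telescoping argument over the variables.

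First I would introduce $\mathcal{I}_k$ to denote the partial application of $\mathcal{I}_{N_k}$ in the $k$-th argument (the other arguments held fixed), so that $\mathhat{g} = \mathcal{I}_1 \mathcal{I}_2 \cdots \mathcal{I}_m (g + e)$. The triangle inequality gives
\begin{align}
\norm{g - \mathhat{g}}_\infty \leq \norm{g - \mathcal{I}_1 \cdots \mathcal{I}_m g}_\infty + \norm{\mathcal{I}_1 \cdots \mathcal{I}_m e}_\infty.
\end{align}
For the first term I use the telescoping identity
\begin{align}
I - \mathcal{I}_1 \cdots \mathcal{I}_m = \sum_{k=1}^{m} \mathcal{I}_1 \cdots \mathcal{I}_{k-1} (I - \mathcal{I}_k),
\end{align}
where the empty product is the identity. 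Taking $L^\infty$ norms, each prefactor $\mathcal{I}_1 \cdots \mathcal{I}_{k-1}$ contributes $\Lambda_{N_1} \cdots \Lambda_{N_{k-1}}$, because the $L^\infty$ operator norm of a partial Chebyshev interpolant on multivariate continuous functions equals the univariate Lebesgue constant (obtained by freezing the remaining variables and taking a supremum). The residual $(I - \mathcal{I}_k) g$ is a univariate Chebyshev error in variable $k$, with the other variables frozen; the hypothesis that $g$ is analytic in its $k$-th argument inside $E_{\rho_k}$ with slice-bound $C_k$ lets me apply Trefethen's Theorem 8.2 to each slice, giving $\norm{(I - \mathcal{I}_k) g}_\infty \leq 4 C_k / ((\rho_k - 1) \rho_k^{N_k})$ uniformly.

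The second term is handled by the submultiplicativity of operator norms on the tensor product: $\norm{\mathcal{I}_1 \cdots \mathcal{I}_m e}_\infty \leq \Lambda_{N_1} \cdots \Lambda_{N_m} \norm{e}_\infty$. Combining the two contributions yields \cref{eq:multivariate_chebyshev_error_bound_for_analytic_functions}. For the symmetric case $N_k = N$, $\rho_k = \rho$, $C_k = C$, the sum $\sum_{k=1}^m \Lambda_N^{k-1}$ collapses to the geometric series $(\Lambda_N^m - 1)/(\Lambda_N - 1)$, producing \cref{eq:multivariate_chebyshev_error_bound_for_analytic_functions_simplified}. The Lebesgue-constant bound $\Lambda_N \leq \frac{2}{\pi} \log(N+1) + 1$ is classical and may be cited.

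The main technical point is verifying the $L^\infty$ norm behavior of the partial interpolants $\mathcal{I}_k$ on the tensor-product space $C([-1,1]^m \to \reals)$ --- that they commute (since they act on disjoint variables) and that their operator norm is exactly the univariate Lebesgue constant. This is the content of Schultz's tensor-product framework and is the only step that is not immediate from the univariate theory; everything else is assembly of the telescoping identity with the univariate Chebyshev bound applied slice-by-slice.
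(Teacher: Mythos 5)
Your proposal is correct and follows essentially the same route as the paper: the paper obtains this corollary by citing Schultz's Theorem~2.1 (which is exactly your telescoping identity $I - \mathcal{I}_1\cdots\mathcal{I}_m = \sum_k \mathcal{I}_1\cdots\mathcal{I}_{k-1}(I-\mathcal{I}_k)$ with Lebesgue-constant operator norms) together with Trefethen's Theorem~8.2 applied slice-by-slice. You have simply written out the proof of the cited tensor-product lemma rather than invoking it.
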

}

Note that these bounds for tensorized Chebyshev approximation are slightly better than those obtained by Mason~\cite[(21) and (22)]{Mason1980}.
In particular, tensorized Chebyshev approximation yields quasi-optimal approximations of continuous functions by bounded-degree polynomials~\cite{Mason1980}.

The approximant $\mathhat{g}$ in \cref{prop:multivariate_chebyshev_error_bound_for_analytic_functions} must in practice be constructed via some tensor decomposition of the evaluation tensor $G_{i_1 \dots i_m} = g(t_{i_1}, \dots, t_{i_m})$.
Unfortunately, this suffers from the curse of dimensionality because it requires access to all $N_1 \cdots N_m$ entries of $G$.
To mitigate this, a common assumption is that $G$ can be well approximated by a tensor $\mathhat{G} \in \reals^{N_1 \times \dots \times N_m}$ that admits a data-sparse tensor decomposition.
Consider, for example, a \emph{Tucker decomposition} $\mathhat{G} = (U^{(1)} \otimes \dots \otimes U^{(m)}) \cdot C$, where $C \in \reals^{r_1 \times \dots \times r_m}$ and each $U^{(k)} \in \reals^{N_k \times r_k}$, $k = 1$, \dots, $m$, is a matrix.
Discrete tensors containing smooth function evaluations can be well approximated by a Tucker decomposition with small $r_k$'s~\cite{ST2021}.
\new{%
Given such a Tucker decomposition of $G$, we can construct $\mathhat{g}$ as
\begin{align}
	\mathhat{g}(x) &= (\mathcal{I}_{N_1} \otimes \dots \otimes \mathcal{I}_{N_m})((U^{(1)} \otimes \dots \otimes U^{(m)}) \cdot C)(x)\\
 	&= \sum_{a_1 = 1}^{r_1} \cdots \sum_{a_m = 1}^{r_m} C_{a_1 \dots a_m} h^{a_1}_1(x_1) \cdots h^{a_m}_m(x_m), \label{eq:construct_hatg}
\end{align}
where $h^{a}_{k} \in \reals^{N_k}$ is the univariate Chebyshev interpolant constructed from the $a$th column of $U^{(k)} \in \reals^{N_k \times r_k}$.
Each factor is thus approximated independently.
}

Evidently, the curse of dimensionality is only truly circumvented if a data-sparse model can also be constructed from a \emph{sparse} sample of the tensor $G$, i.e., from viewing only a few entries of $G$.
In theory, this is possible for any polynomial-based decomposition model with a number of samples that equals the dimensionality of the model; see \cite{BGMV2023,RWX2021} for precise statements.
In practice, compressed sensing approximation schemes have been proposed for several tensor decomposition models; see, among others,~\cite{Oseledets2010,BGK2013,GKK2015,Steinlechner2016,KSV2014,STDdLS2013}.

The foregoing multivariate approximation scheme is naturally extended to approximate vector-valued $g$ by applying it component-wise.
We have chosen to present \cref{alg:manifold_approximation_scheme} with a basic orthogonal Tucker decomposition~\cite{Lathauwer2000} as data-sparse model, computed with the sequentially truncated higher-order singular value decomposition (ST-HOSVD) approximation algorithm~\cite{Vannieuwenhoven2012} from the full tensor.

\section{Numerical experiments}%
\label{sec:Numerical experiments}

To illustrate \cref{alg:manifold_approximation_scheme} and to verify that the approximant it produces satisfies the error bounds predicted by \cref{thm:manifold_error_bound}, we present some examples inspired by applications in linear algebra.
All experiments were run on an HP EliteBook x360 830 G7 Notebook with 32 GiB memory and an Intel Core i7-10610U CPU using Julia 1.9.3 and \texttt{ManiFactor.jl} 0.1.0 on Ubuntu 22.04.3 LTS.

\subsection{Implementation details and Julia package}%
\label{sub:Julia package}

\Cref{alg:manifold_approximation_scheme} has been implemented in our Julia package, \texttt{ManiFactor.jl}, which is available online along with the experiments from this section~\cite{manifactor}.
Our package depends on the package \texttt{Manifolds.jl} by Axen, Baran, Bergmann, and Rzecki~\cite{Axen2023}.
Many of the exponential and logarithmic maps for the manifolds in \cref{sec:Manifolds with lower bounded sectional curvature} are implemented there.

Since any vector approximation scheme can be used in our template from \cref{sub:Contribution}, \texttt{ManiFactor.jl} has been implemented to allow for any such choice.
There are many other interesting alternatives to tensorized Chebyshev interpolation.
We just mention a few here that are also decomposition-based: \cite{BM2002,BM2005,GKM2019,BEM2016}.

By default, \texttt{ManiFactor.jl} uses a Chebyshev interpolation from \texttt{ApproxFun.jl} by Olver and Townsend~\cite{Olver14}, combined with a ST-HOSVD from \texttt{TensorToolbox.jl} by Lana Periša~\cite{Perisa23}.

\subsection{An example from Krylov subspaces}%
\label{sub:An example from Krylov subspaces}

In this example we approximate a map into the \emph{Grassmannian manifold} $\mathrm{Gr}(n, k)$ of $k$-dimensional linear subspaces of $\reals^{n}$.
See \cref{sub:Homogeneous manifolds} for a discussion of its geometry.
The example is inspired by Zimmermann's ~\cite[section 4.5]{Zimmermann22b}discussion of Grassmannians in the context of model order reduction, and Benner, Gugercin, and Willcox's ~\cite{Benner15}discussion in the context of Krylov subspace methods.
\new{%
See also Soodhalter, de Sturler, and Kilmer's~\cite{Soodhalter20} recent survey on \emph{Krylov subspace recycling}.
}

\new{%
Consider the parametrized differential equation on $[0, 1]$,
\begin{align}\label{eq:differential_equation_to_be_discretized}
	\derivative[2]{y}{t} + x_1 \derivative{y}{t} + x_2 y = t (1 - t),\quad
	y(0) = y(1) = 0,
\end{align}
where $x_1$ and $x_2$ are parameters.
This can model, for example, heat transfer in an uninsulated rod.
We can discretize the equation by dividing $[0, 1]$ into $n$ equal parts.
Denoting $\Delta = 1 / n$, $Y = (y(i \Delta))_{i = 0}^{n}$, and $B = ((i \Delta) (1 - i \Delta))_{i = 0}^{n}$ and using a finite difference scheme to estimate $\derivative*[2]{y}{t}$ and $\derivative*{y}{t}$, we have
\begin{align}
	\frac{Y_{i - 1} - 2 Y_i + Y_{i + 1}}{\Delta^{2}} + x_1 \frac{-Y_{i - 1} + Y_{i + 1}}{2 \Delta} + x_2 Y_i ={}& B_i,\\
	A(x_1, x_2) Y ={}& B,
\end{align}
where
\begin{align}
	A(x_1, x_2) = 
	\frac{1}{\Delta^{2}}
	\begin{bmatrix}
		-2	& 1\\
		1 	& -2 	 	& 1\\
			& \ddots 	& \ddots 	& \ddots\\
			&			& 1 		& -2 	 	& 1\\
			&			&			& 1		& -2
	\end{bmatrix}
	+ \frac{x_1}{2 \Delta}
	\begin{bmatrix}
		0	& 1\\
		-1 	& 0 	 	& 1\\
			& \ddots 	& \ddots 	& \ddots\\
			&			& -1 		& 0 	 	& 1\\
			&			&			& -1		& 0
	\end{bmatrix}
	+ x_2
	\begin{bmatrix}
		1	& \\
			& \ddots 	&\\
			&			& 1
	\end{bmatrix}.
\end{align}
}

\new{%
Krylov subspace methods solve high-dimensional linear equations like these by looking for a solution in a \emph{Krylov subspace} $\operatorname{span} \set{v, A v, \dots, A^{k - 1} v}$.
Typically, $A$ is also multiplied by some \emph{preconditioner} $P^{-1}$, so that the Krylov subspace is
\begin{align}
	\mathcal{K}_k(A, v) = \operatorname{span} \set{v, P^{-1} A v, \dots, (P^{-1} A)^{k - 1} v}.
\end{align}
The tighter that the eigenvalues of $P^{-1} A$ are clustered, the faster the error in the Krylov subspace converges.
In our numerical experiments, we use a generic algebraic multigrid preconditioner from Julia package \texttt{Preconditioners.jl}, but problem-specific preconditioners are available for most classic differential equations.
}

\new{%
If we fix $k$ and $v \in \reals^{n}$, then 
\begin{align}\label{eq:map_into_grassmannian}
	f(x_1, x_2) = \mathcal{K}_k(A(x_1, x_2), v)
\end{align}
is a map from $\reals^{2}$ into $\mathrm{Gr}(n, k)$.
Rather than computing the Krylov subspace from its definition for each $(x_1, x_2)$, we want to be able to quickly compute an approximate Krylov subspace.
}

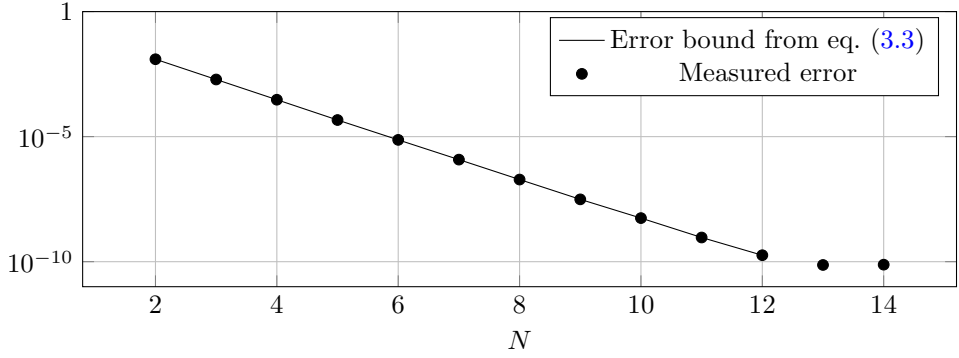
\begin{figure}[t]
	\centering
	\begin{tikzpicture}
	\begin{semilogyaxis}[
	xlabel=$N$,
	ytick={1e0,1e-5,1e-10},
	yticklabels={\num{e0},\num{e-5},\num{e-10}},
	xtick={2, 4, 6, 8, 10, 12, 14, 16},
	xticklabels={2, 4, 6, 8, 10, 12, 14},
	grid=major,
	width=1.0\textwidth,
	height=0.4\textwidth,
	ymax=1e0,
	ymin=1e-11,
	]
	\pgfplotstableread[row sep=newline,col sep=comma]{
		Ns,es,bs
		2,0.012449186576274142,0.012451547618608956
		3,0.0019284211881600395,0.0019292386213432555
		4,0.0002994124599646736,0.0002996394072707707
		5,4.619579918720073e-5,4.624860236753842e-5
		6,7.422432240689437e-6,7.430347040709181e-6
		7,1.205633769848871e-6,1.2072007813166879e-6
		8,1.9242457284465447e-7,1.9270505791624848e-7
		9,3.105477331742593e-8,3.1102151639431605e-8
		10,5.5320504831485664e-9,5.5391514151364385e-9
		11,9.271930801918417e-10,9.273093056242986e-10
		12,1.8160030123734654e-10,1.8074223897984137e-10
		13,7.395544549256469e-11,nan
		14,7.605369791824881e-11,nan
	}\mydata
	\addplot[no marks] table [x=Ns, y=bs] {\mydata};
	\addplot[only marks, mark=*] table [x=Ns, y=es] {\mydata};
	\legend{Error bound from \cref{eq:manifold_error_bound1}\\Measured error\\}
	\end{semilogyaxis}
	\end{tikzpicture}
	\caption{Approximation error for the map in \cref{eq:map_into_grassmannian} into the Grassmannian $\mathrm{Gr}(n, k)$, compared against what is predicted by \cref{thm:manifold_error_bound}, for different numbers of Chebyshev nodes $N$.}
	\label{fig:krylov_subspace}
\end{figure}

\new{%
In this example, we approximate $f$ on $R = [1, 2]^{2}$ with $n = 1000$, $k = 5$, and $v = B$.
These parameters are chosen so that $A Y = B$ can be solved with forward error on the order of $\num{e-7}$ using $\mathcal{K}_k(A, v)$, and so that the original differential equation, \cref{eq:differential_equation_to_be_discretized}, can be solved with error on the order of $\num{e-5}$.
}

We now proceed as in \cref{alg:manifold_approximation_scheme}.
Choose $p$ and let $g = {\operatorname{log}_{p}} \circ f$.
Then consider the polynomial $\mathhat{g}$ that interpolates $g$ in $(N + 1)^{2}$ Chebyshev nodes on $R$.
Finally, let $\mathhat{f} = {\operatorname{exp}_{p}} \circ {\mathhat{g}}$.

Since both $f$ and the manifold logarithm are analytic, and because of \cref{prop:multivariate_chebyshev_error_bound_for_analytic_functions}, we expect the error $\norm{g - \mathhat{g}}_p$ to decay at least exponentially in $N$.
Indeed, the numerical experiment presented in \cref{fig:krylov_subspace} supports this guess.
In this experiment, we restrict $f$ to a discrete subset $R' \subset R$ of $\num{1000}$ points, sampled independently from a uniform distribution on $R$.
We are thus able to use $\epsilon = \max_{x \in R'} \norm{g(x) - \mathhat{g}(x)}_p$ in \cref{eq:manifold_error_bound1} to derive a bound for $d(f(x), \mathhat{f}(x))$.
\Cref{fig:krylov_subspace} shows that this error bound is confirmed by the experiment.
After $N = 12$, rounding causes the errors to plateau and the error bound stops being meaningful.
The full code to produce \cref{fig:krylov_subspace} is available in the \texttt{ManiFactor.jl} package~\cite[\texttt{Example3.jl}]{manifactor}

\subsection{An example from dynamic low-rank approximation}%
\label{sub:An example from dynamic low-rank approximation}

Expanding on a synthetic example by Ceruti and Christian~\cite{Ceruti22}, we consider
\begin{align}
	A(x_1, x_2, x_3) = \exp{x_1} \exp{x_2 W_1} D \transpose{\left( \exp{x_3 W_2} \right)},
\end{align}
where $D$ is an $n \times n$ diagonal matrix with diagonal entries $1$, $2^{-1}$, \dots, $2^{1 - n}$, and $W_1$ and $W_2$ are skew-symmetric matrices, with entries sampled independently from a uniform distribution on $[-1, 1]$, and then normalized.
Recall that when $W$ is skew-symmetric, $\exp{W}$ is orthogonal.
The best rank-$1$ approximation to $A$ is
\begin{align}\label{eq:map_into_segre}
	f(x) = \exp{x_1} \exp{x_2 W_1} e_1 \transpose{\left( \exp{x_3 W_2} e_1 \right)},
\end{align}
where $e_1$ is the column vector $(1, 0, \dots, 0)$.
Thus $f$ is a map from $\reals^{3}$ into the \emph{Segre manifold} of rank-$1$ matrices $\operatorname{Seg}(\reals^{n} \times \reals^{n})$.
See \cref{sub:Segre manifold} for a discussion of its geometry.
We approximate $f$ on $R = [-1, 1]^{3}$ with $n = 100$.

Similarly to \cref{sub:An example from Krylov subspaces}, we proceed as \cref{alg:manifold_approximation_scheme}, while restricting to a discrete subset $R' \subset R$ of $\num{1000}$ points sampled independently from a uniform distribution on $R$.
Again, because of \cref{prop:multivariate_chebyshev_error_bound_for_analytic_functions}, we expect the error to decay exponentially in the number of Chebyshev nodes $N = N_1 = N_2 = N_3$.
By \cref{eq:segre_sectional_curvature}, we have a lower bound $H = -\exp{2}$ for the sectional curvature.
Thus \cref{thm:manifold_error_bound} implies that the approximation error is bounded by \cref{eq:manifold_error_bound2}.

The numerical experiment presented in \cref{fig:closest_rank_one_matrix} again confirms the error bound.
After $N = 13$, rounding causes the errors to plateau and the error bound stops being meaningful.
The full code to product \cref{fig:closest_rank_one_matrix} is available in the \texttt{ManiFactor.jl} package~\cite[\texttt{Example5.jl}]{manifactor}.

\begin{figure}[t] 
	\centering
	\begin{tikzpicture}
	\begin{semilogyaxis}[
	xlabel=$N$,
	ytick      ={1e0,1e-5,1e-10,1e-15},
	yticklabels={\num{e0},\num{e-5},\num{e-10},\num{e-15}},
	xtick      ={2, 3, 4, 5, 6, 7, 8, 9, 10, 11, 12, 13, 14, 15, 16},
	xticklabels={2, 3, 4, 5, 6, 7, 8, 9, 10, 11, 12, 13, 14, 15, 16},
	grid=major,
	width=1.0\textwidth,
	height=0.4\textwidth,
	ymin=1e-15,
	]
	\pgfplotstableread[row sep=newline,col sep=comma]{
		Ns,es,bs
		2,0.37820712940694434,1.9358418368781574
		3,0.05632353442849433,0.48088925322957443
		4,0.006633818069585923,0.05934192983083497
		5,0.0006358810042859098,0.005692457287231779
		6,5.135181047233317e-5,0.0004597093484987388
		7,3.583409012953188e-6,3.203810896051839e-5
		8,2.2019144507723354e-7,1.972220114736185e-6
		9,1.3744444175911857e-8,1.0824906422111038e-7
		10,6.682068832633296e-10,5.372621912426975e-9
		11,2.942243904266294e-11,2.4239386948759377e-10
		12,1.3580222079831663e-12,1.0029074699474928e-11
		13,5.843822938840181e-14,4.30536288636737e-13
		14,9.982228716336735e-15,nan
		15,9.893303102138232e-15,nan
		16,1.2821773727292417e-14,nan
	}\mydata;
	\addplot[no marks] table [x=Ns, y=bs] {\mydata};
	\addplot[only marks,mark=*] table [x=Ns, y=es] {\mydata};
	\legend{Error bound from \cref{eq:manifold_error_bound2}, Measured error};
	\end{semilogyaxis}
	\end{tikzpicture}
	\caption{Approximation error for the map in \cref{eq:map_into_segre} into the Segre manifold $\operatorname{Seg}(\reals^{n} \times \reals^{n})$, compared against what is predicted by \cref{thm:manifold_error_bound}, for different numbers of Chebyshev nodes $N$ in each variable.}
	\label{fig:closest_rank_one_matrix}
\end{figure}

\subsection{Effects of using a retraction}%
\label{sub:Effects of retraction}

\begin{figure}[t]
	\centering
	\begin{tikzpicture}
	\begin{semilogyaxis}[
	xlabel=$N$,
	ytick={1e0,1e-5,1e-10},
	yticklabels={\num{e0},\num{e-5},\num{e-10}},
	xtick={2, 4, 6, 8, 10, 12, 14, 16},
	xticklabels={2, 4, 6, 8, 10, 12, 14},
	grid=major,
	width=1.0\textwidth,
	height=0.4\textwidth,
	ymax=1e0,
	ymin=1e-11,
	]
	\pgfplotstableread[row sep=newline,col sep=comma]{
		Ns,es0,es1,es2
		2,0.012449186576274142,0.012549935050241481,0.012549935050241478
		3,0.0019284211881600395,0.002004418407215595,0.0020044184072155963
		4,0.0002994124599646736,0.0003196102439274483,0.00031961024392745854
		5,4.619579918720073e-5,5.061164559015156e-5,5.061164559016016e-5
		6,7.422432240689437e-6,8.289640844933448e-6,8.289640844949548e-6
		7,1.205633769848871e-6,1.3601517663312395e-6,1.3601517663321636e-6
		8,1.9242457284465447e-7,2.1649011674537205e-7,2.1649011674744278e-7
		9,3.105477331742593e-8,3.441244440645916e-8,3.4412444421722167e-8
		10,5.5320504831485664e-9,5.9995096430036865e-9,5.999509638587159e-9
		11,9.271930801918417e-10,9.682403440670558e-10,9.682403435056248e-10
		12,1.8160030123734654e-10,1.7746517623065015e-10,1.77465190148819e-10
		13,7.395544549256469e-11,7.284215590821134e-11,7.284233262901796e-11
		14,7.605369791824881e-11,7.597657383584195e-11,7.597659016176408e-11
	}\mydata
	\addplot[only marks, mark=*] table [x=Ns, y=es0] {\mydata};
	\addplot[only marks, mark=triangle, mark size=4] table [x=Ns, y=es1] {\mydata};
	\addplot[only marks, mark=square, mark size=3] table [x=Ns, y=es2] {\mydata};
	\legend{Error using exponential map\\Error using QR retraction\\Error using polar retraction\\}
	\end{semilogyaxis}
	\end{tikzpicture}
	\caption{Approximation error for the map in \cref{eq:map_into_grassmannian}, using different retraction methods.}
	\label{fig:krylov_subspace_reloaded}
\end{figure}
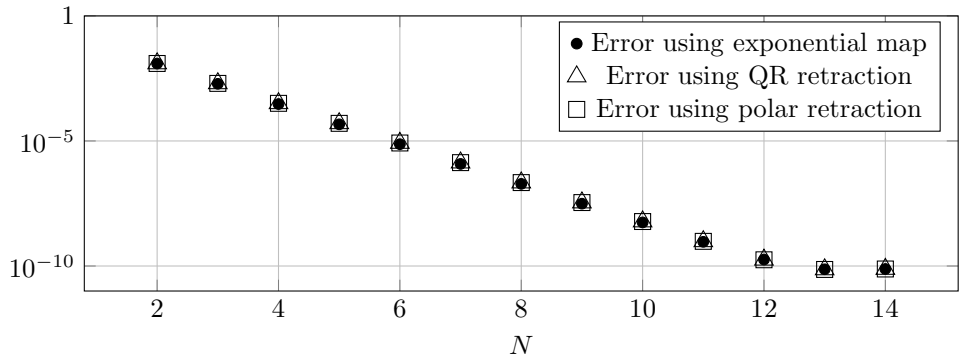

\new{%
In \cref{sub:Retractions}, we discussed how a retraction and inverse retraction may be used instead of exponential and logarithmic maps.
For manifolds where the exponential or logarithmic maps are not known, we have no choice but to retract.
However, it can also be interesting to do so even when the geodesics are known, since it may be faster or more numerically stable.
}

\new{%
\texttt{Manifolds.jl} implements two retractions on the Grassmannian manifold, the \emph{QR retraction} and the \emph{polar retraction}.
They are described in detail by Absil et al.~\cite[section 4.1]{Absil08}.
We are now interested in how replacing the exponential map affects the Riemannian error.
Recall the setup from \cref{sub:An example from Krylov subspaces}.
In \cref{fig:krylov_subspace_reloaded}, we recreate the experiment from \cref{fig:krylov_subspace} and compare with using the retractions.
There is essentially no difference in the errors.
}

\new{%
We are also interested in how the time to evaluate $\mathhat{f}$ is affected.
Let $N = 10$ and let $\mathhat{G}$ be a ST-HOSVD of $G$ with core dimensions $(r_1, r_2, r_3) = (5, 5, 5)$.
Using \texttt{btime} method from \texttt{BenchmarkTools.jl}, we observe
\begin{itemize}
	\item \SI{2.45e-4}{\second} to evaluate $\mathhat{f}$ with exponential map,
	\item \SI{1.57e-4}{\second} to evaluate $\mathhat{f}$ with QR retraction,
	\item \SI{1.55e-4}{\second} to evaluate $\mathhat{f}$ with polar retraction.
\end{itemize}
Retracting in this case thus brings a significant speedup.
}

\new{%
The full code to product \cref{fig:krylov_subspace_reloaded} and the timings are available in the \texttt{ManiFactor.jl} package~\cite[\texttt{Example4.jl}]{manifactor}.
}

\section{Conclusions}
\label{sec:conclusions}

In conclusion, we find that the approximability of maps into a Riemannian manifold is related to the curvature of the manifold.
Moreover, for many manifolds that are relevant for numerical analysis, especially matrix manifolds, explicit curvature bounds allow us to derive explicit error bounds.

From here, several areas of further research are visible.
First, one could derive the geodesics and sectional curvature for more manifolds.
For example spaces of tensors with fixed rank or multilinear rank.
Second, one could develop more efficient and more optimal ways of choosing the point $p$ from where the manifold is linearized.
The current way, geodesic interpolation of sample evaluations, is a heuristic to estimate the Karcher mean of those evaluations, which in turn is a heuristic to minimize $\sigma$ in \cref{thm:manifold_error_bound}.
Third, it could be possible to consider more than one tangent space.
Given several approximations of $f$, based around several points, what is a good estimate of $f(x)$, and what are good ways of choosing such points?

\appendix

\section{Manifolds with lower bounded sectional curvature}
\label{sec:Manifolds with lower bounded sectional curvature}

In this section, we list Riemannian manifolds with explicit lower bounds for their sectional curvature $K$ and whose manifold exponential and logarithm are known.
For these manifolds, \cref{thm:manifold_error_bound} gives an explicit bound for the error on the manifold in terms of the error on the tangent space.
While sectional curvature is always bounded locally, for many manifolds that are relevant for applications we can find explicit expressions for those local bounds.
In many cases, the sectional curvature is even bounded globally, so that we can choose the same $H$ for every chart in \cref{thm:manifold_error_bound}.

\subsection{Lie groups}%
\label{sub:Lie groups}

Given an inner product on its Lie algebra, one can extend it to a unique Riemannian metric on the Lie group by demanding invariance under left (or right) translation.
Because of this invariance, the set of sectional curvatures at a point is independent of that point.
Therefore, a lower bound for the sectional curvature at one point will be a global lower bound.
There is an expression for the sectional curvature of a Lie group in terms of its Lie bracket~\cite[Proposition 3.18 (3)]{Cheeger08}.
We only note that, for matrix groups whose Lie algebra has the Euclidean inner product, it follows from this expression that%
\footnote{Using $\norm{[u, v]} \leq \sqrt{2} \norm{u} \norm{v}$, which also implies $\norm{\operatorname{ad}_u}_{\mathrm{op}} \leq \sqrt{2} \norm{u}$.}%
\begin{align}
	-\frac{11}{2} \leq K.
\end{align}

For example, $\mathrm{GL}(n)$ is the group of invertible $n \times n$ matrices.
Its metric is $\innerproduct{u}{v}_A = \operatorname{tr} \mleft( \transpose{\left( \inverse{A} u \right)} \inverse{A} v \mright)$,
where $u$, $v \in T_A \mathrm{GL}(n) = \reals^{n \times n}$.
Locally, all (finite-dimensional) Lie groups are Lie subgroups of some $\mathrm{GL}(n)$.
Andruchow, Larotonda, Recht, and Varela~\cite{Andruchow14} derive its geodesics.

The compact Lie groups and their products with vector spaces are precisely the Lie groups that admit a \emph{bi-invariant} metric, i.e., a metric that is invariant under both left and right translation.
This is also equivalent to the manifold exponential agreeing with the Lie group exponential.
The sectional curvature is positive for such manifolds~\cite[Corollary 3.19 and Proposition 3.34]{Cheeger08}.

The classification of compact Lie groups is a classic result.
The real compact Lie groups are~\cite[Theorem 2.17]{Arvanitogeorgos03}
\begin{itemize}
	\item tori $S^{1} \times \dots \times S^{1}$,
	\item orthogonal groups $\mathrm{O}(n)$ and special orthogonal groups $\mathrm{SO}(n)$,
	\item unitary groups $\mathrm{U}(n)$ and special unitary groups $\mathrm{SU}(n)$,
	\item spin groups $\mathrm{Spin}(n)$,
	\item symplectic groups $\mathrm{Sp}(n)$,
	\item compact exceptional Lie groups $G_2$, $F_4$, $E_6$, and $E_8$,
	\item products, disjoint unions, and finite covers of the above.
\end{itemize}

\subsection{Homogeneous spaces}%
\label{sub:Homogeneous manifolds}

A \emph{Riemannian homogeneous space} is a Riemannian manifold equipped with some transitive action that the metric is invariant under.
Similarly to Lie groups, the curvature is then the same at every point.
Any Riemannian homogeneous space $X$ is of the form $G / H$ where $G$ is the Lie group that acts on $X$ and $H$ is a Lie subgroup.
Moreover, the curvature of $X$ is always greater than or equal to the curvature of $G$.

The \emph{Stiefel manifold} $\mathrm{St}(n, k) = \mathrm{O}(n) \mathbin{/} \mathrm{O}(n - k)$ is the space of orthogonal $n \times k$ matrices.
Zimmermann~\cite{Zimmermann17} considers efficient geodesic computations on the Stiefel manifold.

The \emph{Grassmannian manifold} $\mathrm{Gr}(n, k) = \mathrm{O}(n) \mathbin{/} (\mathrm{O}(n - k) \times \mathrm{O}(k))$ is the space of $k$-dimensional linear subspaces of $\reals^{n}$.
Bendokat, Zimmermann, and Absil~\cite{Bendokat23} consider efficient geodesic computations on the Grassmannian manifold.

For the Stiefel and Grassmannian manifolds, their metrics as Riemannian homogeneous spaces are called \emph{canonical metrics} to distinguish them from their metrics as embedded submanifolds of $\reals^{n \times n}$.
With the canonical metric, they both have nonnegative curvature.

\new{%
Let $0 = d_0 < d_1 < \dots < d_k = n$ be an increasing integer sequence.
The \emph{flag manifold} $\mathcal{F}_{(d_0, \dots, d_{k})} = \mathrm{O}(n) \mathbin{/} (\mathrm{O}(d_1 - d_0) \times \dots \times \mathrm{O}(d_k - d_{k - 1}))$ is the space of increasing sequences of linear subspaces in $\reals^{n}$.
It has nonnegative curvature.
}

$\sigma_B \from A \mapsto B A \transpose{B}$ defines a transitive action of $\mathrm{GL}(n)$ on the space $\mathcal{P}_n$ of \emph{symmetric positive definite $n \times n$ matrices}.
The \emph{affine invariant metric} on $\mathcal{P}_n$ is thus derived from the quotient $\mathcal{P}_n = \mathrm{GL}(n) \mathbin{/} \mathrm{O}(n)$.
See Bhatia~\cite[Theorem 6.1.6]{Bhatia07} for an expression for its geodesics.
It has nonpositive curvature.

Similarly to $\mathcal{P}_n$, $\sigma_B$ defines a transitive action of $\mathrm{GL}^{+}(n)$, the group of matrices with positive determinant, on the space $\mathrm{S}_+(n, k)$ of \emph{rank-$k$ positive semidefinite $n \times n$ matrices}.
We have that
\begin{align}
	\mathrm{S}_+(n, k) = \mathrm{GL}^{+}(n) \mathbin{/}
	\begin{bmatrix}
		\mathrm{SO}(k) & \reals^{k \times (n - k)}\\
		& \mathrm{GL}^{+}(n - k)
	\end{bmatrix}.
\end{align}
Vandereycken, Absil, and Vandewalle~\cite{Vandereycken12} use a right-invariant metric on $\mathrm{GL}^{+}(n)$ to derive geodesics on $\mathrm{S}_+(n, k)$.

We also mention another metric that can be put on the space of rank-$k$ positive semidefinite matrices.
Let $\reals^{n \times k}_{*}$ be the space of full rank $n \times k$ matrices.
Any rank-$k$ positive semidefinite matrix can be written as $Y \transpose{Y}$, $Y \in \reals^{n \times k}_{*}$.
Such a representation is unique up to right multiplication by an orthogonal matrix.
Hence, we define $\mathcal{S}_{+}(n, k) = \reals^{n \times k}_{*} \mathbin{/} \mathrm{O}(k)$, where $\reals^{n \times k}_{*}$ is equipped with the Euclidean metric.
Although this manifold is not a Riemannian homogeneous space, a metric may still be induced by demanding that the quotient map is a Riemannian submersion.
Massart and Absil~\cite{Massart20} summarize its geometry.
It has nonnegative curvature.

\subsection{Segre manifold}%
\label{sub:Segre manifold}

The \emph{Segre manifold}, $\mathrm{Seg}(\reals^{N_1} \times \dots \times \reals^{N_m})$, is the space of rank-$1$ tensors in the product space $\reals^{N_1} \otimes \dots \otimes \reals^{N_m}$ \cite{Harris1992}.
It is immersed by
\begin{align}
	\operatorname{Seg} \from \reals^{+} \times S^{N_1 - 1} \times \dots \times S^{N_m - 1} &\to \reals^{N_1} \otimes \dots \otimes \reals^{N_m},\\
	(\lambda, x_1, \dots, x_m) &\mapsto \lambda x_1 \otimes \dots \otimes x_m.
\end{align}
Swijsen, van der Veken, and Vannieuwenhoven~\cite{Swijsen2021} derive the exponential map for this manifold in the metric induced by the immersion.
Swijsen~\cite[theorem 6.2.1]{Swijsen22} derive the manifold logarithm.
\new{%
Both the exponential and logarithmic map are shown to be efficient to evaluate.
}

Moreover, Swijsen~\cite[Corollary 6.2.4]{Swijsen22} shows that the sectional curvature satisfies
\begin{align}
	-\frac{1}{\lambda^2} \leq K \leq 0.%
	\label{eq:segre_sectional_curvature}
\end{align}
Note that in this case there is only a local lower bound, namely if the image of $f$ fits in a chart with $\lambda$ lower bounded by $\lambda_*$, then we may choose $H = -1 / \lambda_*^{2}$.

\bibliography{bibliography}

\end{document}